\documentclass{amsart}
\usepackage{amssymb}
\usepackage{amsmath}
\usepackage{latexsym}
\usepackage{graphicx} 
\usepackage{amscd}
\usepackage{eufrak}
\usepackage{mathrsfs}
\usepackage[english]{babel}
\usepackage{hyperref}
\usepackage{verbatim}
\usepackage{tikz,tikz-cd}
\usetikzlibrary{matrix,arrows, babel}

\newtheorem{theorem}[equation]{Theorem}
\newtheorem{theorem-definition}[equation]{Theorem-Definition}
\newtheorem{lemma-definition}[equation]{Lemma-Definition}
\newtheorem{definition-prop}[equation]{Proposition-Definition}
\newtheorem{corollary}[equation]{Corollary}
\newtheorem{prop}[equation]{Proposition}
\newtheorem*{prop*}{Proposition}
\newtheorem{lemma}[equation]{Lemma}

\theoremstyle{definition}

\newtheorem{definition}[equation]{Definition}

\newcommand{\llpar}{(\negthinspace(}
\newcommand{\rrpar}{)\negthinspace)}

\theoremstyle{definition}
\newtheorem{example}[equation]{Example}

\newtheorem{remark}[equation]{Remark}

\newcommand{\LL}{\ensuremath{\mathbb{L}}}

\newcommand{\Z}{\ensuremath{\mathbb{Z}}}
\newcommand{\Q}{\ensuremath{\mathbb{Q}}}

\newcommand{\R}{\ensuremath{\mathbb{R}}}
\newcommand{\C}{\ensuremath{\mathbb{C}}}

\newcommand{\A}{\ensuremath{\mathbb{A}}}

\newcommand{\cY}{\ensuremath{\mathscr{Y}}}

\renewcommand{\R}{\ensuremath{\mathbb{R}}}
\renewcommand{\C}{\ensuremath{\mathbb{C}}}

\renewcommand{\A}{\ensuremath{\mathbb{A}}}

\renewcommand{\cY}{\ensuremath{\mathscr{Y}}}

\newcommand{\Spec}{\ensuremath{\mathrm{Spec}\,}}

\newcommand{\red}{\mathrm{red}}

\newcommand{\Var}{\mathrm{Var}}

\newcommand{\Hom}{\mathrm{Hom}}

\newcommand{\trop}{\mathrm{trop}}

\newcommand{\VF}{\mathrm{VF}}

\numberwithin{equation}{subsection}

\hyphenpenalty=6000 \tolerance=10000

\begin{document}
\title[Grothendieck rings of polytopes]{Grothendieck rings of polytopes and non-archimedean semi-algebraic sets}

\author[Johannes Nicaise]{Johannes Nicaise}
\address{ Imperial College,
Department of Mathematics, South Kensington Campus,
London SW7 2AZ, UK, and KU Leuven, Department of Mathematics, Celestijnenlaan 200B, 3001 Heverlee, Belgium} \email{johannes.nicaise@kuleuven.be}

\begin{abstract}
Let $\Gamma$ be a divisible subgroup of $(\R,+)$. Our central result  states that, at the level of Grothendieck groups, the classification of $\Gamma$-rational polyhedra in $\R^n$ up to affine transformations in  $\Gamma^n\rtimes \mathrm{GL}_n(\Z)$ is equivalent to the classification up to affine transformations in  $\Gamma^n\rtimes \mathrm{GL}_n(\Q)$. We prove this by giving an explicit description of these Grothendieck groups. This yields, in particular, a positive answer to the basic case of a question by Hrushovski and Kazhdan; all other cases are still open. As a second application, we give a simple description of the kernel of the motivic volume for non-archimedean semi-algebraic sets, which is a key ingredient of Hrushovski and Kazhdan's theory of motivic integration.	
\end{abstract}

\maketitle

\section{Introduction}\label{sec:intro}
 The central problem of this paper is the classification of rational polyhedra in $\R^n$ up to cut-and-paste relations. More precisely, let us fix a divisible subgroup $\Gamma$ of $(\R,+)$. A {\em $\Gamma$-constructible set} in $\R^n$ is a finite Boolean combination of closed half-spaces in $\R^n$ defined by equations of the form 
 $$a_1x_1+\ldots+ a_nx_n\geq c$$
 where $a_1,\ldots,a_n$ are integers and $c$ lies in $\Gamma$. A special class of $\Gamma$-constructible sets are the {\em $\Gamma$-rational polyhedra}, finite intersections of closed half-spaces as above. 

 There are two natural ways to define isomorphisms between $\Gamma$-constructible sets in $\R^n$. We can consider all affine transformations of $\R^n$ whose linear part lies in $\mathrm{GL}_n(\Q)$ and whose translation vector lies in $\Gamma^n$. Alternatively, we can further restrict to affine transformations with linear part in $\mathrm{GL}_n(\Z)$. 
 These two types of affine transformations both preserve the class of $\Gamma$-constructible sets in $\R^n$.
  We say that two $\Gamma$-constructible sets in $\R^n$ are {\em rationally isomorphic} over $\Gamma$ if they are connected by an affine transformation of the first kind, and {\em integrally isomorphic} over $\Gamma$ if they are connected by an affine transformation of the second kind.
  
  It is easy to construct examples of $\Gamma$-constructible sets that are rationally isomorphic but not integrally isomorphic; a simple example are the intervals $[0,1]$ and $[0,2]$ (with $\Gamma=\Q$). Surprisingly, this distinction disappears at the level of the Grothendieck groups $\mathbf{K}(\Gamma[n])$ and $\mathbf{K}_{\Q}(\Gamma[n])$, the free abelian groups on integral, resp.~rational, isomorphism classes of $\Gamma$-constructible sets in $\R^n$ modulo the {\em scissor relations} that allow to decompose a $\Gamma$-constructible set into finitely many constructible pieces. We refer to Section \ref{ss:GrGrPol} for precise definitions. Our central result (Theorem \ref{theo:explicit}) states that the natural group morphism 
  $$\mathbf{K}(\Gamma[n])\to \mathbf{K}_{\Q}(\Gamma[n])$$
  is an isomorphism. The divisibility condition on $\Gamma$ is essential: we will show in Proposition \ref{prop:distinct} that  $\mathbf{K}(\Gamma[1])\to \mathbf{K}_{\Q}(\Gamma[1])$ is injective only when $\Gamma$ is divisible.
    We prove Theorem \ref{theo:explicit} by giving an explicit description of $\mathbf{K}(\Gamma[n])$, which shows that the class of a $\Gamma$-constructible set in $\R^n$ is completely determined by its Euler characteristic and so-called {\em bounded} Euler characteristic (this was already known for the groups $\mathbf{K}_{\Q}(\Gamma[n])$). Our proof  rests upon a generalization of the Brianchon-Gram decomposition theorem (Theorem \ref{theo:ubBriGra}).

  The broader question about the difference between $\mathbf{K}(\Gamma[n])$ and $\mathbf{K}_{\Q}(\Gamma[n])$ for arbitrary subgroups $\Gamma$ of $\R$ (and in even greater generality) was raised by Hrushovski and Kazhdan in their seminal paper on motivic integration over non-archimedean valued fields \cite{HK}. They formulated a precise expectation in Question 9.9 of \cite{HK}. In Section \ref{ss:question}, we show how our main result in Theorem \ref{theo:explicit} confirms this expectation when $\Gamma$ is divisible. This is the most basic case, but also the only one where an answer is currently known.

 In Section \ref{sec:appli}, we present the application that motivated our study of the groups $\mathbf{K}(\Gamma[n])$. Let $(K,v)$ be an algebraically closed $\R$-valued field with residue field $k$ of characteristic $0$. Let $\Gamma=v(K^{\ast})\subset \R$ be the value group.
  A good example to keep in mind is the case where $K$ is the field of complex Puiseux series; then $k=\C$ and $\Gamma=\Q$. The main goal of Hrushovski and Kazhdan's theory of motivic integration in \cite{HK} is the classification of {\em semi-algebraic} sets over $K$, that is, subsets of $K^n$ defined by inequalities between valuations of polynomials over $K$ (see Section \ref{ss:GrSemAlg}). They achieve this by expressing the Grothendieck ring of semi-algebraic sets over $K$ in terms of the graded ring $$\mathbf{K}^{\dim}(\Gamma)=\bigoplus_{n\geq 0}\mathbf{K}(\Gamma[n])$$ and the Grothendieck ring of algebraic $k$-varieties graded by dimension; we recall their result in Theorem \ref{theo:HKorig}. An important application of this expression is the construction of the {\em motivic volume}, a ring morphism from the Grothendieck ring of semi-algebraic sets over $K$ to the Grothendieck ring of algebraic varieties over $k$. The motivic volume measures the behaviour of semi-algebraic sets under reduction to the residue field $k$. It plays a key role in several applications of the theory; see in particular \cite{NPS,NP,NiSh}. In Theorem \ref{theo:kernel}, we deduce from Theorem \ref{theo:explicit} a simple description of the kernel of the motivic volume, which explains precisely which part of the geometry is collapsed under reduction from $K$ to $k$: we are formally contracting the closed unit ball in $K$ to a point.

\subsection*{Acknowledgements} The inspiration for this article was a lecture series on Hrushovski-Kazhdan motivic integration that I gave as a John von Neumann Visiting Professor at TU Munich in the autumn term of 2022. I am grateful to TU Munich and my host, Christian Liedtke, for their hospitality and support. I am also endebted to the students in the course for their thoughtful questions, which helped to shape this article. Further thanks go out to Raf Cluckers for clarifying the state of the literature in the final stages of the project.

The author was supported
by EPSRC grant EP/S025839/1, grants G079218N and G0B17121N of the
Fund for Scientific Research--Flanders, and long term structural funding (Methusalem grant) by the Flemish Government.

\section{Grothendieck rings of polyhedra}\label{sec:GroPol}
\subsection{$\Gamma$-rational polyhedra}\label{ss:GratPol}
 Let $\Gamma$ be a subgroup of $\R$. For every lattice $L$, we will write $L_{\Q}=L\otimes_{\Z}\Q$ and $L_{\R}=L\otimes_{\Z}\R$. 
   Let $N$ be a lattice of rank $n$, and denote by $M=\mathrm{Hom}_{\mathbb{Z}}(N,\mathbb{Z})$ the dual lattice.
  We identify $M_{\R}$ with the $\R$-vector space $\Hom_{\Z}(N,\R)$.  Our choice of the notations $N$ and $M$ is motivated by applications to tropical and non-archimedean geometry  which will not appear explicitly in this article ($M$ is then the character lattice of an algebraic torus). 

\begin{definition}\label{defi:polytope}
	A {\em $\Gamma$-rational polyhedron} in $N_{\R}$ is a finite intersection of closed half-spaces of the form 
	$$H=\{w\in N_{\R}\,|\,\ell(w)\geq c\}$$ where $\ell\in M\setminus \{0\}$ and $c\in \Gamma$.
 A {\em $\Gamma$-rational polytope} is a bounded $\Gamma$-rational polyhedron.		
  A {\em $\Gamma$-constructible set} in $N_{\R}$ is a finite Boolean combination of $\Gamma$-rational polyhedra (equivalently, of closed half-spaces $H$ as above).
\end{definition}

 It is obvious that every face of a $\Gamma$-rational polyhedron is again a $\Gamma$-rational polyhedron.  
 The definition of a $\Gamma$-rational polyhedron depends on the lattice $N$ and the divisible hull $\Gamma \otimes_{\Z}\mathbb{Q}$ of $\Gamma$ in $\R$, but not on $\Gamma$ itself, since we can always rescale the inequality in the formula for the half-space $H$. When considering $\Gamma$-rational polyhedra in $\R^n$, we will tacitly assume that $N$ is the standard lattice $\Z^n$, unless explicitly stated otherwise. 
 
 \begin{example}\item
 	\begin{enumerate}
 	\item The whole space $N_{\R}$ is a $\Gamma$-rational polyhedron in $N_{\R}$ (intersection of the empty family of half-spaces). The empty subset of $N_{\R}$ is a $\Gamma$-rational polytope if and only if $\Gamma\neq \{0\}$.  
 	\item For every finite subset $S$ of $N_{\Q}$, the convex hull of $S$ in $N_{\R}$ is a $\Q$-rational polytope.
 	\item The $\{0\}$-rational polyhedra in $N_{\R}$ are the rational polyhedral cones in $N_{\R}$. 
 	\item The half-space $\{(x,y)\in \R^2\,|\,y-\sqrt{2} x\geq 0\}$ is not a $\Gamma$-rational polyhedron in $\R^2$ for any choice of $\Gamma$.
 	\item The interval
 	$[\sqrt{2},\infty)$ is not a $\Q$-rational polyhedron in $\R$, but it is a $(\Q+\sqrt{2}\Q)$-rational polyhedron. 	
 	\item The semi-open interval $[0,1)$ is not a $\Q$-rational polyhedron in $\R$, but it is a $\Q$-constructible set.
 	\end{enumerate}	
 \end{example}

Let $N$ and $N'$ be lattices of the same rank $n$. Let $C$ and $C'$ be $\Gamma$-constructible subsets of $N_{\R}$, resp.~$N'_{\R}$. We say that $C$ and $C'$ are {\em rationally isomorphic over $\Gamma$} if there exist an isomorphism of $\Q$-vector spaces 
$\varphi\colon N_{\Q}\to N'_{\Q}$ and an element $b$ in $N'_{\Q}\otimes_{\Z}\Gamma$ such that the affine transformation
$$\psi\colon N_{\R}\to N'_{\R},\,w\mapsto \varphi_{\R}(w)+b$$ satisfies $\psi(C)=C'$. Here $\varphi_{\R}$ denotes the $\R$-linear extension of $\varphi$.
  We say that $C$ and $C'$ are {\em integrally isomorphic over $\Gamma$} if we can find $\varphi$ and $b$ such that $\varphi(N)=N'$ and $b\in N'\otimes_{\Z}\Gamma$. Note that we do not allow isomorphisms between $\Gamma$-constructible sets that live in vector spaces of different dimensions. When $\Gamma$ is clear from the context, we will simply speak of rational and integral isomorphisms between  $\Gamma$-constructible sets. The definition of an integral isomorphism is sensitive to $\Gamma$, and not only to its divisible hull $\Gamma\otimes_{\Z}\Q$.

\begin{example}\label{exam:notiso}
	\item 
	\begin{enumerate}
		\item The $\Z$-rational polytopes $[0,1]$ and $[0,2]$ in $\R$	are rationally isomorphic, but not integrally isomorphic: integral isomorphisms preserve the volumes of polytopes.
		\item The $\Z$-rational polytopes $\{0\}$ and $\{1/2\}$ in $\R$ are integrally isomorphic over $\Q$, but not over $\Z$: integral isomorphisms over $\Z$ preserve the lattice. 
		\end{enumerate}
\end{example}

Surprisingly, if $\Gamma$ is divisible, then the distinction between integral and rational isomorphisms disappears when we pass to the level of Grothendieck rings.

\subsection{Graded Grothendieck rings of $\Gamma$-rational polyhedra}\label{ss:GrGrPol}
 For every $n\geq 0$, we denote by $\mathbf{K}(\Gamma[n])$ the abelian group defined by the following presentation:
\begin{itemize}
	\item generators: integral isomorphism classes $[C]$ of $\Gamma$-constructible subsets in $\R^n$, with respect to the standard lattice $\Z^n$;
	\item relations: if $D\subset C$ are $\Gamma$-constructible subsets in $\R^n$, then 
	$$[C]=[D]+[C\setminus D].$$
\end{itemize}
These relations are called {\em scissor relations}, because they allow to decompose $\Gamma$-constructible sets into constructible pieces. 
 They imply, in particular, that $[\emptyset]=0$ in $\mathbf{K}(\Gamma[n])$. 
 We also consider the graded abelian group
$$\mathbf{K}^{\dim}(\Gamma)=\bigoplus_{n\geq 0}\mathbf{K}(\Gamma[n]).$$
If $C$ is a $\Gamma$-constructible subset of $\R^n$ and we want to make the grading explicit, then we write $[C]_n$ for the class of $C$ in $\mathbf{K}^{\dim}(\Gamma)$.

If $C$ is a $\Gamma$-constructible subset of $\R^m$ and $C'$ is a $\Gamma$-constructible subset of $\R^{n}$, then it follows immediately from the definition that $C\times C'$ is a $\Gamma$-constructible subset of $\R^{m+n}$. There exists a unique graded ring structure on $\mathbf{K}^{\dim}(\Gamma)$ such that 
$$[C]_m\cdot [C']_{n}=[C\times C']_{m+n}$$ for all such $C$ and $C'$. The identity element for the ring multiplication is $1=[\mathbf{pt}]_0$, the class of the unique point in $\R^0$.  We call this graded ring the {\em Grothendieck ring of $\Gamma$-constructible sets graded by dimension}. Note that the grading keeps track of the dimension of the ambient vector space $\R^n$ and does not refer to the dimension of the $\Gamma$-constructible subset $C$. We set 
$\sigma=[\,\{0\}\,]_1$. Multiplication by $\sigma$ shifts the grading by $1$: for every $\Gamma$-constructible set $C$ in $\R^n$, it sends the class of $C$ in $\mathbf{K}(\Gamma[n])$ to the class of $C\times \{0\}\subset \R^{n+1}$ in $\mathbf{K}(\Gamma[n+1])$.

 If $N$ is a lattice of rank $n$, then every $\Gamma$-constructible subset $C$  of $N_{\R}$ defines a unique element $[C]$ in $\mathbf{K}(\Gamma[n])$: 
 the choice of an isomorphism $\varphi\colon N\to \Z^n$ gives rise to a $\Gamma$-constructible subset $C'=\varphi_{\R}(C)$ in $\R^n$ that is integrally isomorphic to $C$, and the class $[C']$ is independent of the choice of $\varphi$, so that we may define $[C]=[C']$. The following basic calculation will be important for our analysis of the ring $\mathbf{K}^{\dim}(\Gamma)$.
 
 \begin{example}\label{exam:prodzero}
 Consider the $\Gamma$-constructible set	
 $C=\R_{> 0}\times \R_{\geq 0}$ in $\R^2$. We can partition $C$ into the $\Gamma$-constructible subsets
 $$\begin{array}{lcl}
 C_1&=&\{(x,y)\in \R^2\,|\,x > y\geq 0\},
 \\C_2&=&\{(x,y)\in \R^2\,|\,y \geq x > 0\}.	
 	\end{array}$$
 Each of these subsets is integrally isomorphic to $C$ {\em via} a linear transformation in $\mathrm{GL}_2(\Z)$, so that 
 $$[C]_2=[C_1]_2+[C_2]_2=[C]_2+[C]_2$$ in 
 $\mathbf{K}^{\dim}(\Gamma)$. Consequently,  
 $$[\R_{\geq 0}]_1\cdot [\R_{>0}]_1=[C]_2=0.$$
 \end{example}

 We similarly define abelian groups $\mathbf{K}_{\Q}(\Gamma[n])$ and a graded ring $\mathbf{K}_{\Q}^{\dim}(\Gamma)$ by replacing integral isomorphism classes by rational isomorphism classes as generators in the definition of the Grothendieck group. For every subgroup $\Gamma'$ of $\R$ that contains $\Gamma$, there are obvious ``extension of scalars'' morphisms of graded rings 
 $$\mathbf{K}^{\dim}(\Gamma)\to \mathbf{K}^{\dim}(\Gamma'),\qquad \mathbf{K}^{\dim}_{\Q}(\Gamma)\to \mathbf{K}^{\dim}_{\Q}(\Gamma')$$ that view $\Gamma$-constructible sets as a $\Gamma'$-constructible sets.
  There is also a natural surjective morphism of graded rings  
\begin{equation}\label{eq:compar}
	\mathbf{K}^{\dim}(\Gamma)\to \mathbf{K}^{\dim}_{\Q}(\Gamma)
\end{equation} 
that maps, for every integer $n\geq 0$ and every $\Gamma$-constructible subset $C$ of $\R^n$, the class of $C$ in $\mathbf{K}(\Gamma[n])$ to the class of $C$ in  $\mathbf{K}_{\Q}(\Gamma[n])$.

Assuming that $\Gamma$ is divisible, we will give an explicit description of the ring $\mathbf{K}^{\dim}(\Gamma)$ in Theorem \ref{theo:explicit}. It implies that, remarkably, the morphism \eqref{eq:compar} is an isomorphism, so that   we do not lose any information in the Grothendieck ring by passing from integral isomorphism classes to rational isomorphism classes. The following toy example illustrates this phenomenon.

\begin{example} The $\Z$-rational polytopes $C_1=[0,1]$ and $C_2=[0,2]$ in $\R$ are rationally isomorphic, but not integrally isomorphic (see Example \ref{exam:notiso}). Still, they define the same class in $\mathbf{K}(\Z[1])$: by the scissor relations, 
$[C_2]-[C_1]$ is equal to the class of $(1,2]$, which we can write as the difference of the classes of $(1,\infty)$ and $(2,\infty)$. These two classes are equal because 	$(1,\infty)$ and $(2,\infty)$ are integrally isomorphic {\em via} translation by $1$.
\end{example}

\section{Rational {\em vs.} integral isomorphisms}\label{sec:RatVsInt}
\subsection{Euler characteristics on $\mathbf{K}^{\dim}_{\Q}(\Gamma)$.}\label{ss:euler}
In order to arrive at an explicit description of $\mathbf{K}^{\dim}(\Gamma)$, we  
 recall two types of Euler characteristics that can be defined on the ring $\mathbf{K}^{\dim}_{\Q}(\Gamma)$; see Section \ref{ss:question} below for further background.  

The Euler characteristic 
$$\chi\colon \mathbf{K}^{\dim}_{\Q}(\Gamma)\to \Z$$
is the unique ring morphism that satisfies the following properties:
\begin{enumerate}
	\item  for every integer $n\geq 0$ and every non-empty $\Gamma$-rational polytope $P$ in $\R^n$, we have 
	$\chi([P]_n)=1$;
	\item we have $\chi([\R_{\geq 0}]_1)=0$.
\end{enumerate}
The {\em bounded} Euler characteristic 
$$\chi_b\colon \mathbf{K}^{\dim}_{\Q}(\Gamma)\to \Z$$
is the unique ring morphism such that, 
for every integer $n\geq 0$ and every non-empty $\Gamma$-rational polyhedron $P$ in $\R^n$, we have 
$\chi([P]_n)=1$. For any $\Gamma$-constructible subset $C$ in $\R^n$, we will also write $\chi(C)$ and $\chi_b(C)$ instead of $\chi([C]_n)$ and $\chi_b([C]_n)$.

The Euler characteristic $\chi$ can be defined by means of cell decomposition for $o$-minimal structures \cite{vdD}; in our setting, it agrees with the compactly supported topological Euler characteristic. The bounded Euler characteristic $\chi_b$ is related to $\chi$ by the fact that, for every $\Gamma$-constructible set $C$ in $\R^n$, 
$$\chi_b(C)=\chi(C\cap [-\gamma,\gamma]^n)$$
for all sufficiently large $\gamma\in \Gamma$ (this formula is also valid when $\Gamma=\{0\}$ and $\gamma=0$). 

Combining these Euler characteristics with the grading on $\mathbf{K}^{\dim}_{\Q}(\Gamma)$, we obtain a morphism of graded rings 
\begin{equation}\label{eq:DOAG}	 \mathbf{K}^{\dim}_{\Q}(\Gamma)\to \Z[u,v]/(uv),\,[C]_n\mapsto \chi(C)u^n+\chi_b(C)v^n.
\end{equation}

\subsection{A Brianchon-Gram decomposition theorem for polyhedra}\label{ss:BriGra}

 A {\em polyhedron} in $\R^n$ is a finite intersection of affine half-spaces, without any rationality conditions on the equations. A {\em polytope} is a bounded polyhedron. For every polyhedron $P$ in $\R^n$, we denote by $\mathbf{1}_P\colon \R^n\to \Z$ its indicator function, which maps points of $P$ to $1$ and all other points of $\R^n$ to $0$.
 
 If $P$ is a polyhedron in $\R^n$ and $F$ is a face of $P$, then we denote by $T_FP$ the tangent cone of $P$ along $F$:
 $$T_FP=\{f+t(p-f)\in \R^n\,|\,f\in F,\,t\in \R_{\geq 0},\,p\in P\}.$$
 The classical Brianchon-Gram decomposition theorem expresses a polytope in terms of its tangent cones. It was originally stated as a relation between the angles of the tangent cones;  we will need a finer statement that can be found, for instance, in  Section 1.1 of \cite{haase}.
 
 \begin{theorem}[Brianchon-Gram decomposition theorem for polytopes]\label{theo:BriGra}
For every polytope $P$ in $\R^n$, we have 
$$\mathbf{1}_{P}=\sum_{F\preccurlyeq P}(-1)^{\dim(F)}\mathbf{1}_{T_FP}$$
where the sum is taken over all the faces of $P$.
 \end{theorem}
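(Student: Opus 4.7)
The plan is to verify the identity pointwise: fix $x \in \R^n$ and evaluate both sides at $x$. Two preliminary observations organise the argument. First, $T_F P \supseteq P$ for every face $F$ (take $t = 1$ in the definition), so every tangent cone contains $P$. Second, $T_F P \subseteq T_G P$ whenever $F \subseteq G$, so the set $S(x) = \{F \preccurlyeq P \,:\, x \in T_F P\}$ is an upward-closed subset of the face poset of $P$.

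When $x \in P$, the first observation gives $S(x) = \{\text{all non-empty faces of } P\}$, so the right-hand side reduces to $\sum_{F} (-1)^{\dim F}$. I would evaluate this via Euler's relation for the face lattice of a $d$-dimensional polytope: the boundary $\partial P$ is a $(d-1)$-sphere, so the alternating sum of proper face numbers equals $1 + (-1)^{d-1}$, and adding the contribution $(-1)^d$ from $P$ itself yields $1 = \mathbf{1}_P(x)$.

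When $x \notin P$, the goal becomes $\sum_{F \in S(x)} (-1)^{\dim F} = 0$. Writing $T_F P = \bigcap_{G \text{ facet},\, G \supseteq F} H_G$ with $H_G$ the supporting half-space at $G$, one sees that $F \notin S(x)$ precisely when $F$ is contained in some \emph{visible} facet $G$, meaning a facet with $x \notin H_G$. Let $V$ denote the set of visible facets. Inclusion-exclusion, combined with the Euler identity from the previous case applied to each non-empty intersection $\bigcap_{G \in I} G$ (which is itself a face of $P$), rewrites $\sum_{F \in S(x)^c}(-1)^{\dim F}$ as $\sum_{\emptyset \neq I \subseteq V,\,\bigcap_{G \in I} G \neq \emptyset}(-1)^{|I|+1}$. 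This is precisely the Euler characteristic of the nerve of the closed cover $\{G\}_{G \in V}$ of the visible region $U = \bigcup_{G \in V} G$. Since intersections of faces are convex and hence contractible, the nerve lemma gives $\chi(\text{nerve}) = \chi(U)$. Finally, radial projection from the external point $x$ identifies $U$ with a convex region in any hyperplane separating $x$ from $P$, so $U$ is contractible and $\chi(U) = 1$. Combined with the total Euler sum $1$ from the first case, this gives $\sum_{F \in S(x)} (-1)^{\dim F} = 1 - 1 = 0$, as required.

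The main obstacle is justifying the nerve/contractibility step rigorously: one must verify that the polyhedral cover $\{G\}_{G \in V}$ of $U$ meets the hypotheses of the nerve lemma, and establish the geometric claim that the visible part of $\partial P$ from any external point is contractible. Both are standard but require care. An alternative route avoiding the nerve lemma is an induction on the number of facets of $P$, or a sweeping argument: move a test point from the interior of $P$ outward along a generic ray and track how $S(x)$ changes each time a facet hyperplane is crossed, showing that each transition leaves the alternating sum invariant.
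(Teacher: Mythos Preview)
The paper does not give its own proof of Theorem~\ref{theo:BriGra}; it cites \cite{haase}. However, the paper's proof of the generalization Theorem~\ref{theo:ubBriGra} specializes to a proof of the polytope case, and that is the natural point of comparison. Your pointwise strategy is exactly the paper's strategy there: for $x\in P$ every tangent cone contains $x$ and the alternating face sum is the Euler characteristic of $P$; for $x\notin P$ the faces with $x\in T_FP$ are precisely the \emph{non-visible} ones, so the right-hand side equals $\chi(P)-\chi(U_v)$, where $U_v$ is the union of the visible faces, and the task reduces to showing $\chi(U_v)=1$.

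Two technical differences are worth noting. First, your nerve-lemma detour is correct but unnecessary: since $S(x)$ is upward closed (as you observed), the visible faces $S(x)^c$ form a downward-closed polyhedral subcomplex of $\partial P$, and the relative interiors of its faces give a CW decomposition of $U_v$; hence $\sum_{F\in S(x)^c}(-1)^{\dim F}=\chi(U_v)$ directly, without inclusion--exclusion or the nerve. Second, for the contractibility of $U_v$ the paper takes a different route from your radial projection: it forms the unbounded polyhedron $Q=P+\mathrm{cone}(P)$ (cone with apex $x$), checks that the bounded faces of $Q$ are exactly the visible faces of $P$, and then invokes a retraction argument (Proposition~\ref{prop:contract}) showing that the union of bounded faces of any polyhedral subdivision of a line-free polyhedron is contractible. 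Your radial-projection argument onto a separating hyperplane is shorter and perfectly adequate for polytopes; the paper's auxiliary-polyhedron construction is what carries over to the unbounded case of Theorem~\ref{theo:ubBriGra}, where no separating hyperplane exists.
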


 Theorem \ref{theo:BriGra} is false as stated for unbounded polyhedra:  $\R_{\geq 0}$ and $\R$ are the simplest counterexamples. We will prove an adapted statement for arbitrary polyhedra in Theorem \ref{theo:ubBriGra}. A similar formula already appeared as Theorem 4.1 in \cite{DeFi}, but that formulation requires the assumption that $P$ does not contain any lines, which is missing in \cite{DeFi}. 
  For a proof, the authors of \cite{DeFi} refer to \cite{McMu}, but Theorem \ref{theo:ubBriGra} does not follow directly from any statement in that paper (although it contains closely related results). Rather than digging out a proof from the arguments in \cite{McMu}  (which, at its turn, uses ideas from \cite{Shep}), we have chosen to present a direct proof here. We start by recalling some terminology.

 	Let $P$ be a non-empty polyhedron in $\R^n$. The {\em recession cone} of $P$ is defined as
 $$\mathrm{rec}(P)=\{v\in \R^n\,|\,p+v\in P\mbox{ for all }p\in P\}.$$
 Equivalently, a vector $v\in \R^n$ lies in $\mathrm{rec}(P)$ if and only if there exists a point $p$ in $P$ such that $p+tv\in P$ for all $t\in \R_{\geq 0}$. Thus, the recession cone of $P$ collects the unbounded directions of $P$.   It is $\{0\}$ if and only if $P$ is bounded, and it is strictly convex if and only if $P$ does not contain a line (which is also equivalent to the condition that $P$ has a vertex). 
 
 We denote by $\mathrm{Lin}(P)$ the maximal linear subspace of $\mathrm{rec}(P)$, and by $\ell(P)$ its dimension. The vector space $\mathrm{Lin}(P)$ acts on all faces of $P$ by translation. We say that a face $F$ of $P$ is {\em relatively bounded} if $\mathrm{rec}(F)=\mathrm{Lin}(P)$; equivalently, if the image of $F$ in the quotient space  $\R^n/\mathrm{Lin}(P)$ is bounded.

 A {\em polyhedral subdivision} of $P$ is a finite set $\mathscr{P}$ of polyhedra in $\R^n$ such that $P$ is the union of the elements in $\mathscr{P}$, any face of an element of $\mathscr{P}$ again lies in $\mathscr{P}$, and the intersection of two elements of $\mathscr{P}$ is a common face of both.

 Finally, let $F$ be a non-empty face of $P$, and let $x$ be a point in $\R^n\setminus P$. Let $y$ be a point in the relative interior of $F$. It is easy to see that the line segment $[xy]$ intersects $P$ only at the point $y$ if and only if $x$ does not lie in the tangent cone $T_FP$. In particular, this property only depends on $F$, and not on the choice of the  point $y$. If it is satisfied, then we say that $F$ is {\em visible} from $x$.

\begin{prop}\label{prop:contract}
	Let $P$ be a polyhedron in $\R^n$ that does not contain any lines. Let $\mathscr{P}$ be a polyhedral subdivision of $P$. Then the union of the bounded faces of $\mathscr{P}$ is contractible.	
\end{prop}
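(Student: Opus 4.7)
The plan is to construct a strong deformation retraction of $P$ onto the bounded subcomplex $B$; since $P$ is convex and therefore contractible, this will immediately imply contractibility of $B$. I proceed by induction on $\dim P$. The base case $\dim P=0$ is trivial, and if $P$ is bounded then every cell of $\mathscr{P}$ is bounded, so $B=P$.

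Assume $P$ is unbounded, so that $\mathrm{rec}(P)$ is strictly convex of positive dimension, and pick a vector $v$ in its relative interior. Since $\mathrm{rec}(Q)\subseteq\mathrm{rec}(P)$ is strictly convex for every cell $Q$ of $\mathscr{P}$, the vector $-v$ lies in no $\mathrm{rec}(Q)$, so the stopping time $T(x):=\sup\{t\geq 0 : x-tv\in P\}$ is finite and depends continuously on $x$. The homotopy $H_s(x):=x-sT(x)v$ is then a strong deformation retraction of $P$ onto the image $\rho(P):=H_1(P)$, which is a polyhedral subcomplex of $\mathscr{P}$ of dimension strictly less than $\dim P$, consisting of the ``lower'' cells with respect to $v$.

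The key refinement is to choose $v$ so that $H$ preserves $B$: for every $x\in B$, the trajectory $H_s(x)$ must remain in $B$ for all $s\in[0,1]$. The only way the flow could cross from a bounded cell into an unbounded cell $Q'$ would be through a shared bounded facet, which forces $\langle v,n\rangle<0$ for the inward normal $n$ of $Q'$ at that facet. Since each such $n$ satisfies $\langle w,n\rangle>0$ for every nonzero $w\in\mathrm{rec}(Q')$, one checks that the finitely many constraints $\langle v,n\rangle\geq 0$ are simultaneously satisfiable by some $v\in\mathrm{int}(\mathrm{rec}(P))$. With such a $v$, $H|_B$ is a deformation retraction of $B$ onto $\rho(B)$, which coincides with the bounded subcomplex of the polyhedral complex $\rho(P)$.

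To finish, $\rho(P)$ decomposes into finitely many polyhedra of dimension $<\dim P$, each containing no lines (as sub-polyhedra of $P$), and inheriting a polyhedral subdivision from $\mathscr{P}$. Applying the inductive hypothesis to each maximal piece and gluing through a nerve-theorem argument, using that the pairwise intersections are again bounded subcomplexes of lower-dimensional line-free polyhedra and hence contractible by induction, I conclude that $\rho(B)$ is contractible, and therefore so is $B$. The main obstacles will be the combinatorial lemma producing a direction $v$ compatible with every cell of $\mathscr{P}$, and the nerve-theorem gluing in the final step.
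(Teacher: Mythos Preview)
Your strategy---retract $P$ onto the bounded subcomplex $B$---matches the paper's, but the execution diverges, and the two points you flag as ``main obstacles'' are in fact genuine gaps that the paper's argument is specifically designed to avoid.

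The central issue is the existence of a \emph{single} direction $v\in\mathrm{int}(\mathrm{rec}(P))$ for which the flow in direction $-v$ never passes from a bounded cell into an unbounded one. Your justification is that each inward normal $n$ at a bounded facet of an unbounded cell $Q'$ satisfies $\langle w,n\rangle>0$ for all nonzero $w\in\mathrm{rec}(Q')$. This is correct, but it only tells you that the half-space $\{\langle v,n\rangle\ge 0\}$ contains $\mathrm{rec}(Q')$. Since the recession cones $\mathrm{rec}(Q')$ vary over the unbounded cells, there is no evident reason why the finitely many half-spaces should have a common point in $\mathrm{int}(\mathrm{rec}(P))$. For instance, one can arrange a bounded parallelogram $Q_0\subset\R^2_{\ge 0}$ whose four facet normals are $(\pm 1,\mp 2)$ and $(\pm 2,\mp 1)$; if all four neighbours are unbounded, the constraints force $v_1=2v_2$ and $v_2=2v_1$ simultaneously, leaving only $v=0$. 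Whether such a configuration can be completed to a polyhedral subdivision is not obvious, but neither is its impossibility---and your ``one checks'' does not settle it. The inductive nerve-theorem step is likewise only gestured at.

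The paper sidesteps both difficulties. Rather than a single direction, it takes the ray generators $u_1,\dots,u_\ell$ of the recession fan $\mathrm{rec}(\mathscr{P})$ and, for each $u_i$, defines a \emph{cell-wise} retraction $r_i$: on a cell $Q$ with $u_i\in\mathrm{rec}(Q)$ it slides points back along $-u_i$ to the boundary of $Q$, and on all other cells it is the identity. These glue to a continuous map $r_i\colon P\to P$ whose image is the union of cells whose recession cones omit $u_i$. The composite $r=r_\ell\circ\cdots\circ r_1$ then lands in the cells whose recession cones omit every ray, i.e.\ the bounded cells, and fixes $U_b$ pointwise. No induction, no single compatible direction, and no nerve argument are needed.
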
	
\begin{proof}
	Our argument generalizes the proof of Lemma 4.5 in \cite{Hirai}.
		Let $U_b$ the union of the bounded faces of $\mathscr{P}$. It suffices to construct a continuous retraction $r\colon P\to U_b$. By convexity of $P$, the subspace $U_b$ is then a strong deformation retract of $P$, and the result follows from the contractibility of $P$.
	
	When $Q$ runs over the non-empty polyhedra in $\mathscr{P}$, the recession cones $\mathrm{rec}(Q)$ fit together in a fan $\mathrm{rec}(\mathscr{P})$ of strictly convex polyhedral cones  that subdivides $\mathrm{rec}(P)$, by Theorem 3.4 in \cite{BuSo}. This fan is called the recession fan of $\mathscr{P}$. We choose generators $u_1,\ldots,u_\ell$ for the different rays of $\mathrm{rec}(\mathscr{P})$. 
	
	Let $i\in \{1,\ldots,\ell\}$, let $Q$ be a polyhedron in $\mathscr{P}$, and let $q$ be a point in $Q$. If $u_i$ does not lie in the recession cone of $Q$, then we set $t_{\min}(Q,q,u_i)=0$. Otherwise, we set $$t_{\min}(Q,q,u_i)=\min \{t \in \R\,|\,q+tu_i\in Q\}.$$
		The value $t_{\min}(Q,q,u_i)$ is well-defined, because $Q$ does not contain any lines.
	We use it to define the continuous map   
	 $$r_{i,Q}\colon Q\to Q,\,q\mapsto q+t_{\min}(Q,q,u_i)u_i.$$
		If $F$ is a face of $Q$, and $u_i$ lies in the recession cone of $Q$, then $u_i$ lies in the recession cone of $F$ if and only if it lies in the translation space of the affine span of $F$ in $\R^n$. This implies that, for every point $q$ in $F$, we have 
		$t_{\min}(Q,q,u_i)=t_{\min}(F,q,u_i)$. Consequently,  
	 $r_{i,F}$ is the restriction of $r_{i,Q}$ to $F$, so that the maps $r_{i,Q}$ glue to a continuous map 
		$r_i\colon  P\to P$. We have $r_i\circ r_i=r_i$, and the image of $r_i$ is the union of the polyhedra in $\mathscr{P}$ whose recession cones do not contain $u_i$.
		
		We further set 
		$$r=r_{\ell}\circ \ldots \circ r_1\colon P\to P.$$
		By construction, $r$ is the identity on $U_b$. The image of $r$ is contained in $U_b$, because a non-empty polyhedron $Q$ in $\mathscr{P}$ is bounded if and only if its recession cone does not contain any of the generators $u_1,\ldots,u_\ell$. This shows that $U_b$ is a retract of $P$.
\end{proof}

\begin{corollary}\label{coro:contract}
	Let $P$ be a non-empty polyhedron in $\R^n$.
	\begin{enumerate}
		\item \label{it:unionb} The union $U_b$ of the relatively bounded faces of $P$ is contractible, and it has Euler characteristic $\chi(U_b)=(-1)^{\ell(P)}$.  
		\item \label{it:unionvis} For every point $x\in \R^n\setminus P$, the union $U_v$ of the relatively bounded faces of $P$ visible from $x$ is contractible, and it has Euler characteristic $\chi(U_v)=(-1)^{\ell(P)}$. 
	\end{enumerate}
\end{corollary}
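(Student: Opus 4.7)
The plan is to reduce both parts to the line-free case via the quotient $\pi\colon \R^n\to \R^n/V$ with $V=\mathrm{Lin}(P)$. Since $V\subseteq\mathrm{rec}(P)$ is a linear subspace, $P$ is $V$-invariant and $\pi(P)$ is a polyhedron containing no lines. A face $F$ of $P$ is relatively bounded iff it is $V$-invariant iff $\pi(F)$ is a bounded face of $\pi(P)$; for such an $F$ the tangent cone $T_FP$ is also $V$-invariant with $\pi(T_FP)=T_{\pi(F)}\pi(P)$, so $F$ is visible from $x$ iff $\pi(F)$ is visible from $\pi(x)$, and $\pi(x)\notin\pi(P)$ follows from $V$-invariance of $P$. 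Choosing a linear complement $W$ to $V$ in $\R^n$, one obtains homeomorphisms $U_b\cong V\times\pi(U_b)$ and $U_v\cong V\times\pi(U_v)$, where $\pi(U_b)$ and $\pi(U_v)$ are compact. By multiplicativity of the compactly supported Euler characteristic and $\chi(V)=(-1)^{\ell(P)}$, it suffices to show that in the line-free case both $\pi(U_b)$ and $\pi(U_v)$ are compact and contractible.

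From now on I assume $P$ is line-free. Part (1) is then Proposition \ref{prop:contract} applied to $P$ with its face subdivision, together with the observation that $U_b$ is a finite union of compact polytopes. For part (2), I would combine the retractions $r_1,\ldots,r_\ell$ from the proof of Proposition \ref{prop:contract} with a \emph{push-toward-$x$} map
\[ r_x\colon P\to P,\qquad r_x(p)=p+t_x(p)(x-p),\qquad t_x(p)=\max\{t\in[0,1]\mid p+t(x-p)\in P\}.\]
Closedness and convexity of $P$ together with $x\notin P$ make $r_x$ a well-defined continuous retraction of $P$ onto the visible boundary $V_x$, that is, the union of all visible faces of $P$ (equivalently the set of $p\in P$ with $t_x(p)=0$). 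I claim that $r=r_\ell\circ\cdots\circ r_1\circ r_x$ is a retraction $P\to U_v$. First, $V_x\cap U_b=U_v$, because the unique face containing a given point in its relative interior is visible iff the point lies in $V_x$ and is bounded iff the point lies in $U_b$. Second, $r_\ell\circ\cdots\circ r_1$ lands in $U_b$ by Proposition \ref{prop:contract}, and, granted the observation below, each $r_i$ preserves $V_x$, so $r(P)\subseteq V_x\cap U_b=U_v$. Third, $r$ is the identity on $U_v$: $r_x$ fixes $V_x\supseteq U_v$, and each $r_i$ fixes every bounded face since its recession cone is trivial and does not contain $u_i$. Since $P$ is contractible (being convex), its retract $U_v$ is contractible; and $U_v$, being a finite union of compact polytopes, is compact, so $\chi(U_v)=1$.

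The crux is the claim that each $r_i$ preserves $V_x$. By construction, for any cell $Q$ of the face subdivision of $P$ and any $q\in Q$, the point $r_i(q)$ lies in some subface $Q'\preccurlyeq Q$ (namely $Q$ itself when $u_i\notin\mathrm{rec}(Q)$, and the boundary face of $Q$ reached by pushing $q$ against $-u_i$ otherwise). So the claim reduces to the statement that a subface of a visible face is visible, which in turn follows from the inclusion $T_{Q'}P\subseteq T_QP$ whenever $Q'\preccurlyeq Q$. This inclusion is immediate from the definition $T_FP=\{f+t(p-f)\mid f\in F,\,t\in\R_{\geq 0},\,p\in P\}$: any element $f'+t(p-f')\in T_{Q'}P$ has $f'\in Q'\subseteq Q$, hence already has the form required to lie in $T_QP$. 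This is the step where I expect to need the most care; once it is verified, the remainder is bookkeeping parallel to the proof of Proposition \ref{prop:contract}.
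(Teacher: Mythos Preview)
Your proof is correct. Part~(1) and the reduction to the line-free case match the paper. For part~(2), your route genuinely differs: after the reduction, the paper constructs an auxiliary polyhedron $Q = P + \mathrm{cone}(P_b)$ (with $P_b$ the convex hull of the vertices of $P$), shows that the bounded faces of $Q$ are exactly the bounded visible faces of $P$, and then applies part~(1) to $Q$. You instead build the retraction $P \to U_v$ directly, first pushing toward $x$ via $r_x$ to land on the visible boundary $V_x$, then applying the maps $r_i$ from the proof of Proposition~\ref{prop:contract} to land in $U_b$ while remaining in $V_x$; the key step that each $r_i$ preserves $V_x$ does reduce, as you say, to the inclusion $T_{Q'}P \subseteq T_QP$ for $Q' \preccurlyeq Q$, and your verification of that inclusion is correct. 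The paper's approach has the elegance of reducing~(2) cleanly to~(1) via a single Minkowski-sum construction; yours is more hands-on and stays entirely within $P$, at the mild cost of verifying continuity of $r_x$ (which follows by writing $t_x(p)$ as the minimum of the finitely many continuous exit-time functions $\tau_j(p)=\frac{\ell_j(p)-c_j}{\ell_j(p)-\ell_j(x)}$, one for each defining half-space $H_j$ not containing $x$) and the visibility-preservation of the $r_i$. The two arguments are comparable in length and difficulty.
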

\begin{proof}
	\eqref{it:unionb} The projection $\pi\colon \R^n\to \R^n/\mathrm{Lin}(P)$ induces a bijective correspondence between the relatively bounded faces of $P$ and the bounded faces of $\pi(P)$, and $\pi$ is a trivial fibration with contractible fiber $\mathrm{Lin}(P)$, which has Euler characteristic 
	$\chi(\mathrm{Lin}(P))=(-1)^{\ell(P)}$.  
		Therefore, we may replace $P$ by the polyhedron $\pi(P)$ in $\R^n/\mathrm{Lin}(P)$, and thus reduce to the case where $\ell(P)=0$; that is, where $P$ does not contain any lines. Now the result follows by   
	applying Proposition \ref{prop:contract} to the trivial polyhedral subdivision of $P$, consisting of all the faces of $P$.  
	
	\eqref{it:unionvis} We can reduce to the case where $P$ does not contain any lines as in the proof of \eqref{it:unionb}; note that a face $F$ of $P$ is visible from $x$ if and only if $\pi(F)$ is visible from $\pi(x)$ as a face of $\pi(P)$.
		 We may further assume that $x$ is the origin $0$ of $\R^n$. We denote by $P_b$ be the bounded part of $P$; that is, the convex hull of its vertices. We further write $\mathrm{cone}(P_b)$ for the cone over $P_b$ with vertex $0$; this is a polyhedral cone in $\R^n$. Then we define a polyhedron $Q$ as the Minkowski sum of $P$ and $\mathrm{cone}(P_b)$:
	$$Q=P+\mathrm{cone}(P_b).$$
	We will show that the bounded faces of $Q$ are precisely the bounded faces of $P$ that are visible from $0$, so that the result follows from \eqref{it:unionb}.
	
	Every face of $Q$ is the sum of a face of $P$ and a face of $\mathrm{cone}(P_b)$, so that every bounded face $F$ of $Q$ is also a face of $P$. Moreover, $F$ is visible from $0$ as a face of $Q$, and, {\em a fortiori}, also as a face of $P\subset Q$: if $L$ is a line through $0$ and a point $y$ in $F$, then $L\cap F$ is a face of $L\cap Q$, which is a closed half-line pointing away from $0$ by the construction of $Q$. Therefore, $L\cap F=\{y\}$, and the open segment $(0y)$ does not intersect $Q$. Conversely, if $F$ is a face of $P$ that is visible from $0$, then $P$ has a supporting hyperplane $H$ that contains $F$ and separates $0$ and $P$, because $0$ does not lie in the tangent cone $T_FP$. Then $H$ is still a supporting hyperplane of $Q$, and $F$ is a face of $Q\cap H=P\cap H$, so that $F$ is a face of $Q$. 
	\end{proof}

The following theorem generalizes the Brianchon-Gram theorem to arbitrary  polyhedra.

\begin{theorem}\label{theo:ubBriGra}
Let $P$ be a polyhedron in $\R^n$. If we denote by $B(P)$ the set of non-empty relatively bounded faces of $P$, then 
\begin{equation}\label{eq:ubBriGra}
			\mathbf{1}_{P}=\sum_{F\in B(P)}(-1)^{\dim(F)+\ell(P)}\mathbf{1}_{T_FP}.
	\end{equation}	
\end{theorem}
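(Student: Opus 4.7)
My plan is to verify (\ref{eq:ubBriGra}) by evaluating both sides at every point $x \in \R^n$, using Corollary \ref{coro:contract} as the central input.

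\textbf{Reduction to $\ell(P) = 0$.} Let $\pi \colon \R^n \to \R^n/\mathrm{Lin}(P)$ be the projection. Every face $F$ of $P$ is invariant under translation by $\mathrm{Lin}(P)$, since the facet inequalities active on $F$ are unaffected by such translations; hence $F = \pi^{-1}(\pi(F))$ and $T_F P = \pi^{-1}(T_{\pi(F)}\pi(P))$. The assignment $F \mapsto \pi(F)$ restricts to a bijection between relatively bounded faces of $P$ and bounded faces of $\pi(P)$, with $\dim F = \dim \pi(F) + \ell(P)$. Both sides of (\ref{eq:ubBriGra}) therefore pull back along $\pi$ from the corresponding identity on $\pi(P)$, which contains no lines. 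We may thus assume $\ell(P) = 0$, so that $B(P)$ is the set of non-empty bounded faces and the signs reduce to $(-1)^{\dim F}$.

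\textbf{Pointwise evaluation.} Denote the right-hand side of (\ref{eq:ubBriGra}) evaluated at $x$ by $R(x)$. Since $P \subseteq T_F P$ for every face $F$, when $x \in P$ one has $\mathbf{1}_{T_F P}(x) = 1$ for all $F \in B(P)$, so $R(x) = \sum_{F \in B(P)} (-1)^{\dim F}$. When $x \notin P$, by definition $\mathbf{1}_{T_F P}(x) = 1$ iff $F$ is not visible from $x$, so with $V$ denoting the set of visible bounded faces,
\[
R(x) \;=\; \sum_{F \in B(P)} (-1)^{\dim F} \;-\; \sum_{F \in V} (-1)^{\dim F}.
\]
It suffices to identify these sums as $\chi(U_b)$ and $\chi(U_v)$ respectively, each of which equals $1$ by Corollary \ref{coro:contract} (with $\ell(P) = 0$).

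\textbf{From visibility to Euler characteristic.} For any faces $F' \preccurlyeq F$ of $P$, every facet of $P$ tight on $F$ is also tight on $F'$, so $T_{F'}P \subseteq T_F P$; hence $x \notin T_F P$ implies $x \notin T_{F'} P$ and visibility is inherited by subfaces. Consequently $U_v = \bigsqcup_{F \in V} \mathrm{relint}(F)$ and $U_b = \bigsqcup_{F \in B(P)} \mathrm{relint}(F)$. Since $\mathrm{relint}(F) \cong \R^{\dim F}$ has compactly supported Euler characteristic $(-1)^{\dim F}$, additivity yields $\chi(U_b) = \sum_{F \in B(P)} (-1)^{\dim F}$ and $\chi(U_v) = \sum_{F \in V} (-1)^{\dim F}$, completing the calculation of $R(x)$ in both cases.

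The main obstacle is the visibility-inheritance step: without the containment $T_{F'} P \subseteq T_F P$ for $F' \preccurlyeq F$, the union $U_v$ could contain bounded faces outside $V$ and the Euler characteristic formula would produce spurious terms. This containment is what lets the topological contractibility of $U_v$ from Corollary \ref{coro:contract}(\ref{it:unionvis}) translate directly into the combinatorial cancellation demanded by (\ref{eq:ubBriGra}).
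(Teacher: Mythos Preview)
Your proof is correct and follows essentially the same approach as the paper's: pointwise evaluation using the two parts of Corollary~\ref{coro:contract} to handle $x\in P$ and $x\notin P$. The only organizational difference is that you reduce to $\ell(P)=0$ at the outset, whereas the paper keeps the factor $(-1)^{\ell(P)}$ and relies on Corollary~\ref{coro:contract} in its general form; your explicit verification that visibility is inherited by subfaces (via $T_{F'}P\subseteq T_FP$) actually makes precise a step the paper leaves implicit when it identifies the alternating sum over visible faces with $\chi(U_v)$.
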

\begin{proof}
	 We may assume that $P$ is non-empty; otherwise, the theorem is trivial.
 Let $x$ be a point in $\R^n$.	We denote by $U_b$ the union of the relatively bounded faces of $P$, and by $U_v$ the union of the relatively bounded faces of $P$ that are visible from $x$.
	
 If $x\in P$, then $x$ lies in $T_FP$ for every face $F\in B(P)$. Evaluating the right hand side of \eqref{eq:ubBriGra} at $x$ computes 
 $(-1)^{\ell(P)}\chi(U_b)$, which equals $1$ by Corollary \ref{coro:contract}. 
If $x\notin P$, then $x\in T_FP$ if and only if the face $F$ is not visible from $x$. Therefore, evaluating the right hand side of \eqref{eq:char} at $x$ yields $(-1)^{\ell(P)}(\chi(U_b)-\chi(U_v))$, which vanishes by Corollary \ref{coro:contract}. 
 Consequently, \eqref{eq:ubBriGra} is satisfied at all  points $x$ in $\R^n$.   	
\end{proof}


\subsection{Generators for $\mathbf{K}^{\dim}(\Gamma)$}\label{ss:generators}
We will now use Theorem \ref{theo:ubBriGra} to prove that the ring $\mathbf{K}^{\dim}(\Gamma)$ is generated by $[\R_{\geq 0}]_1$ and $[\R_{>0}]_1$ if $\Gamma$ is divisible.

	\begin{lemma}\label{lemm:poly}
	Every element of $\mathbf{K}^{\dim}(\Gamma)$ can be written as a $\Z$-linear combination of classes of $\Gamma$-rational polyhedra. 
\end{lemma}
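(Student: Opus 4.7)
Since $\mathbf{K}^{\dim}(\Gamma)$ is the direct sum of the groups $\mathbf{K}(\Gamma[n])$, and each $\mathbf{K}(\Gamma[n])$ is generated by classes of $\Gamma$-constructible sets, it is enough to show that for every $n\geq 0$ and every $\Gamma$-constructible subset $C$ of $\R^n$, the class $[C]_n$ can be written as a $\Z$-linear combination of classes of $\Gamma$-rational polyhedra. The plan is to reduce this to a purely Boolean manipulation combined with iterated applications of the scissor relations; no geometric input beyond the definitions is needed, so the Brianchon--Gram machinery of the previous subsection will not be invoked.

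First I would fix closed $\Gamma$-rational half-spaces $H_1,\dots,H_k$ in $\R^n$ such that $C$ is a Boolean combination of them, and consider the Boolean atoms
$$A_I=\bigcap_{i\in I}H_i\;\cap\;\bigcap_{i\notin I}(\R^n\setminus H_i),\qquad I\subset\{1,\dots,k\}.$$
These are pairwise disjoint $\Gamma$-constructible sets whose union is $\R^n$, and $C$ is the disjoint union of those $A_I$ it contains. Applying the scissor relations repeatedly then gives
$$[C]_n=\sum_{A_I\subset C}[A_I]_n,$$
so the task is reduced to proving the statement for each individual atom $A_I$.

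Next, writing $H_i=\{\ell_i\geq c_i\}$ and $P_I=\bigcap_{i\in I}H_i$, I would observe that $P_I$ is a $\Gamma$-rational polyhedron and that
$$A_I=P_I\setminus\bigcup_{i\notin I}(P_I\cap H_i).$$
A standard inclusion--exclusion argument, which in this setting is a direct consequence of the scissor relations applied iteratively to finite unions of constructible subsets of $P_I$, yields
$$[A_I]_n=\sum_{J\subset\{1,\dots,k\}\setminus I}(-1)^{|J|}\Bigl[P_I\cap\bigcap_{j\in J}H_j\Bigr]_n.$$
Each term on the right is the class of a finite intersection of closed $\Gamma$-rational half-spaces, hence of a $\Gamma$-rational polyhedron (with the convention $[\emptyset]_n=0$ for any empty intersections that arise). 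Combining this with the previous step expresses $[C]_n$ as a $\Z$-linear combination of classes of $\Gamma$-rational polyhedra.

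There is no genuine obstacle here; the only minor bookkeeping point is the derivation of the inclusion--exclusion formula from the scissor relations, which proceeds by a straightforward induction on the number of sets in the union. I would therefore state that inclusion--exclusion holds in $\mathbf{K}(\Gamma[n])$ in a preparatory sentence (or leave it to the reader as an immediate consequence of $[X\cup Y]=[X]+[Y]-[X\cap Y]$), and then present the two steps above as the main body of the proof.
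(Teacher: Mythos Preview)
Your proof is correct and follows essentially the same approach as the paper: both arguments reduce the claim to Boolean algebra plus inclusion--exclusion, with the only cosmetic difference that the paper passes through the algebraic (XOR) normal form $C=P_1\oplus\cdots\oplus P_r$ in polyhedra directly, whereas you first decompose into the atoms of the half-space arrangement and then apply inclusion--exclusion to each atom.
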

\begin{proof}
	Let $C$ be a $\Gamma$-constructible subset of $\R^n$. By definition, $C$ is a finite Boolean combination 
	of $\Gamma$-rational polyhedra in $\R^n$. Putting this Boolean combination into algebraic normal form, we can write 
	$$C=P_1 \oplus \ldots \oplus P_r$$
	where $P_1,\ldots,P_r$ are polyhedra in $\R^n$ and $\oplus$ denotes the exclusive disjunction. In other words, $\Gamma$ consists of the points $x$ in $\R^n$ such that the number of indices $i$ in  $\{1,\ldots,r\}$ for which $x\in P_i$ is odd. By an inclusion-exclusion argument, we find that 
	$$[C]_n=\sum_{J}(-1)^{|J|-1}[P_J]_n$$
	where $J$ runs through the non-empty subsets of $\{1,\ldots,r\}$ and $P_J$ is the intersection of the polyhedra $P_j$ with $j\in J$. In particular, $[C]_n$ is a $\Z$-linear combination of classes of polyhedra.
\end{proof}

\begin{lemma}\label{lemm:cone}
	Let $C$ be a rational polyhedral cone in $\R^n$. Then the class of $C$ in $\mathbf{K}^{\dim}(\Gamma)$ is given by 
	$$[C]_n=\left\{
	\begin{array}{ll} 	
		[\R_{\geq 0}]^n_1+(-1)^{n-\dim(C)}[\R_{>0}]^n_1 & \mbox{if }C\mbox{ is a linear subspace of }\R^n,
		\\[1.5ex]  [\R_{\geq 0}]_1^n & \mbox{otherwise.}
		\end{array} 
		\right.$$
\end{lemma}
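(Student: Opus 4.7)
The plan is to split the proof into the two cases of the statement and to let the relation $[\R_{\geq 0}]_1\cdot [\R_{>0}]_1=0$ from Example~\ref{exam:prodzero} do almost all of the work. For the linear-subspace case, if $C$ has dimension $k$ then $C\cap \Z^n$ is saturated in $\Z^n$, so an integral isomorphism carries $C$ to $\R^k\times \{0\}^{n-k}$ and I get $[C]_n=[\R]_1^k\cdot [\{0\}]_1^{n-k}$. Rewriting $[\R]_1=[\R_{\geq 0}]_1+[\R_{>0}]_1$ and $[\{0\}]_1=[\R_{\geq 0}]_1-[\R_{>0}]_1$ via the scissor relations and expanding, the key identity annihilates every mixed monomial, and only the two pure terms $[\R_{\geq 0}]_1^n+(-1)^{n-k}[\R_{>0}]_1^n$ survive.

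For the non-linear case I would first reduce to a strictly convex, full-dimensional cone. If $\dim C=d<n$, an integral isomorphism writes $C=C'\times \{0\}^{n-d}$ with $C'$ full-dimensional in $\R^d$ and still not a linear subspace, and $[C]_n=[C']_d\cdot [\{0\}]_1^{n-d}$ collapses under the key identity to $[C']_d\cdot [\R_{\geq 0}]_1^{n-d}$. If $C$ is full-dimensional with non-trivial lineality $L$ of dimension $\ell$, the saturatedness of $L\cap \Z^n$ gives an integral isomorphism putting $C$ in the form $\R^{\ell}\times \bar C$ with $\bar C\ne \R^{n-\ell}$ strictly convex and full-dimensional in $\R^{n-\ell}$, and $[C]_n=[\R]_1^{\ell}\cdot [\bar C]_{n-\ell}$ collapses similarly. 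In both reductions the key identity kills all terms containing $[\R_{>0}]_1$, so the problem reduces to proving $[C]_n=[\R_{\geq 0}]_1^n$ for $C$ strictly convex, full-dimensional, and distinct from $\R^n$.

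I would attack this core case by induction on $n$, the base case $n=1$ being trivial because up to integral isomorphism the only strictly convex full-dimensional rational cone in $\R$ is $\R_{\geq 0}$. For the inductive step I would triangulate $C$ into rational full-dimensional simplicial subcones using rational rays through the origin on the facets, and write $[C]_n$ via inclusion--exclusion as a $\Z$-linear combination of the classes of the simplicial pieces and of their common faces. The common faces are rational cones of strictly smaller dimension, so the previous reductions combined with the inductive hypothesis put their classes in the desired form. Each full-dimensional simplicial piece is then treated by a secondary induction on the multiplicity of the cone: unimodular simplicial cones are integrally isomorphic to $\R_{\geq 0}^n$ and directly give $[\R_{\geq 0}]_1^n$, while a simplicial cone of multiplicity $>1$ is stellar-subdivided at a primitive lattice point in its fundamental parallelepiped, yielding simplicial subcones of strictly smaller multiplicity.

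The main obstacle I expect is the combinatorial bookkeeping in this last step: one must verify that the signs coming from the inclusion--exclusion over the triangulation, together with those introduced by each stellar subdivision, genuinely combine to yield the single monomial $[\R_{\geq 0}]_1^n$ rather than some non-trivial $\Z$-combination of $[\R_{\geq 0}]_1^n$ and $[\R_{>0}]_1^n$. That the collapse must happen is forced by the absorbing behaviour of $[\R_{\geq 0}]_1^n$ under Example~\ref{exam:prodzero}, but carrying it through without circular appeal to the conclusion of the lemma is the one non-routine step of the argument.
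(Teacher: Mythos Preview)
Your treatment of the linear-subspace case is correct and matches the paper's argument exactly. For the non-linear case your route is viable but more roundabout than necessary, and the gap you flag at the end is real: the ``absorbing behaviour'' from Example~\ref{exam:prodzero} does \emph{not} by itself force the $[\R_{>0}]_1^n$ coefficient to vanish in your inclusion--exclusion. Concretely, after your reductions and inductions every positive-dimensional face in the subdivision contributes $[\R_{\geq 0}]_1^n$, but the vertex $\{0\}$ contributes the extra term $(-1)^n[\R_{>0}]_1^n$, and $\{0\}$ \emph{can} occur as the intersection of a subfamily of maximal cells in a triangulation (take, for instance, three triangles in a triangulation of a hexagonal cone that pairwise share an edge but have no common ray). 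What makes the total $[\R_{>0}]_1^n$ contribution cancel is a topological input you never invoke: since $C$ is not a linear subspace, its intersection with the unit sphere is contractible, hence has Euler characteristic $1$, and this is exactly the identity governing the alternating sum of coefficients. Without it the bookkeeping cannot be closed, and appealing instead to $\chi$ and $\chi_b$ would indeed be circular at this stage of the paper.

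The paper's proof short-circuits most of your machinery. It skips the reduction to the strictly convex full-dimensional case and the double induction entirely: it subdivides $C$ directly into \emph{simple} (unimodular) cones---a standard fact from toric geometry---so there is no need for your secondary induction on multiplicity via stellar subdivision, and then uses Euler's formula for the contractible spherical link to handle all the inclusion--exclusion combinatorics in a single stroke. The missing topological ingredient in your sketch is precisely the one the paper uses; once you have it, the separate reductions and inductions become superfluous.
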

\begin{proof}
	For every $d\in \{0,\ldots,n\}$, we have 
	$$	[\R^d\times \{0\}^{n-d}]_n=\sigma^{n-d}([\R_{\geq 0}]_1+[\R_{> 0}]_1)^{d}.$$ 
Since  $\sigma=[\R_{\geq 0}]_1-[\R_{>0}]_1$, and $[\R_{\geq 0}]_1\cdot [\R_{>0}]_1=0$ by Example \ref{exam:prodzero}, this yields the desired expression when $C$ is a linear subspace of $\R^n$. 

Therefore, we may assume that $C\neq \mathrm{Lin}(C)$. This implies that  the intersection of $C$ with the unit sphere $S^n\subset \R^n$ is contractible.  
		We can subdivide $C$ into simple cones. By inclusion-exclusion and Euler's formula 
		for simplicial subdivisions of $C\cap S^n$, this reduces the problem of proving that $[C]_n=[\R_{\geq 0}]_1^n$ to the case where $C$ is itself simple.
		Then $C$ is integrally isomorphic over $\{0\}$ to $$(\R_{\geq 0})^{\dim(C)} \times \{0\}^{n-\dim(C)},$$ so that $$[C]_n=\sigma^{n-\dim(C)}[\R_{\geq 0}]_1^{\dim(C)}.$$ Since $\dim(C)>0$, and $[\R_{\geq 0}]_1\cdot [\R_{>0}]_1=0$ by Example \ref{exam:prodzero}, we find that $[C]_n=[\R_{\geq 0}]^n_1$.
\end{proof}

\begin{lemma}\label{lemm:brianchon-gram}
If $\Gamma$ is divisible, then every element in $\mathbf{K}^{\dim}(\Gamma)$ is a $\Z$-linear combination of classes of rational polyhedral cones.	
\end{lemma}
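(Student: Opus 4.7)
The plan is to combine Lemma \ref{lemm:poly} with the generalized Brianchon--Gram decomposition from Theorem \ref{theo:ubBriGra}. By Lemma \ref{lemm:poly}, it suffices to show that the class $[P]_n$ of any $\Gamma$-rational polyhedron $P \subset \R^n$ is a $\Z$-linear combination of classes of rational polyhedral cones.

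Applying Theorem \ref{theo:ubBriGra} to $P$ yields the identity of indicator functions
$$\mathbf{1}_P = \sum_{F \in B(P)} (-1)^{\dim F + \ell(P)}\mathbf{1}_{T_F P}.$$
Such identities automatically lift to $\mathbf{K}(\Gamma[n])$: any $\Z$-linear relation $\sum_i c_i\mathbf{1}_{A_i}=0$ between indicator functions of $\Gamma$-constructible sets $A_i$ implies $\sum_i c_i[A_i]_n=0$, as one sees by refining the $A_i$ to a common finite $\Gamma$-constructible partition of $\R^n$ and applying the scissor relations. We therefore obtain
$$[P]_n = \sum_{F \in B(P)} (-1)^{\dim F+\ell(P)}[T_F P]_n,$$
so it remains to exhibit each $[T_F P]_n$ as the class of a rational polyhedral cone. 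For any point $v$ in the relative interior of $F$, the translate $T_F P - v$ equals the tangent cone of $P$ at $v$, which is a rational polyhedral cone: its facet inequalities $\ell_i(w)\geq 0$ arise from the facet inequalities of $P$ active at $v$ by translating them to pass through the origin. If $v$ can be chosen in $\Gamma^n$, then translation by $-v$ is an integral isomorphism over $\Gamma$, so $[T_F P]_n=[T_F P-v]_n$ is the desired class.

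The main (and only) remaining obstacle is therefore to find $v\in\mathrm{relint}(F)\cap\Gamma^n$, which is where divisibility of $\Gamma$ enters. If $\Gamma=\{0\}$ the lemma is immediate, since $P$ is itself already a rational polyhedral cone. Otherwise $\Gamma$ is a non-trivial $\Q$-vector subspace of $\R$, hence dense in $\R$. The affine hull of $F$ is cut out from $\R^n$ by $\Z$-linear equations with right-hand sides in $\Gamma$, and Gaussian elimination -- inverting integer pivots by means of the divisibility of $\Gamma$ -- produces a particular solution $v_0\in\Gamma^n$. The direction $W$ of $\mathrm{aff}(F)$ admits a $\Q$-basis $u_1,\ldots,u_k\in\Q^n$, and since $\Gamma$ is dense in $\R$, the set $v_0+\{a_1u_1+\cdots+a_ku_k : a_i\in\Gamma\}$ is a subset of $\Gamma^n\cap\mathrm{aff}(F)$ that is dense in $\mathrm{aff}(F)$. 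It therefore meets the non-empty relative open $\mathrm{relint}(F)$, producing the required $v$.
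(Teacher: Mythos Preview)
Your proof is correct and follows essentially the same approach as the paper's: reduce to polyhedra via Lemma~\ref{lemm:poly}, apply Theorem~\ref{theo:ubBriGra}, lift the indicator-function identity to $\mathbf{K}^{\dim}(\Gamma)$ (the paper packages this step as an ``integration morphism''), and use divisibility of $\Gamma$ to translate each $T_FP$ to a rational polyhedral cone. The only minor difference is that the paper observes the minimal face of $T_FP$ (namely $\mathrm{aff}(F)$) contains a $\Gamma$-rational point, which already suffices and avoids your density argument for producing a point in $\mathrm{relint}(F)$.
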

\begin{proof}	
	By Lemma \ref{lemm:poly}, it suffices to prove this for the class of an arbitrary $\Gamma$-rational polyhedron $P$ in $\R^n$. 
	For every non-empty face $F$ of $P$, the tangent cone $T_FP$ of $P$ along $F$ is a $\Gamma$-rational polyhedron: if we write $P$ as an intersection of finitely many $\Gamma$-rational half-spaces in $\R^n$, then $T_FP$ is the intersection of the subset of half-spaces that contain $F$ in their boundaries. 
		 Since $\Gamma$ is divisible, the minimal face of $T_FP$ contains a $\Gamma$-rational point.
	Consequently, up to translation by an element in $\Gamma^n$, the tangent cone $T_FP$  is a rational polyhedral cone in $\R^n$. 
	
	We say that a function $f\colon \R^n\to \Z$ is $\Gamma$-constructible if it takes only finitely many values and its fibers are $\Gamma$-constructible sets in $\R^n$. We denote by $\mathcal{C}_{\Gamma}(\R^n)$ the abelian group of $\Gamma$-constructible functions on $\R^n$, where the group operation is pointwise addition, and we form the graded group  $$\mathcal{C}^{\dim}_{\Gamma}=\bigoplus_{n\geq 0}\mathcal{C}_{\Gamma}(\R^n).$$
		The {\em integration morphism}
	\begin{equation}\label{eq:intGamma}
		\mathcal{C}^{\dim}_{\Gamma}\to \mathbf{K}^{\dim}(\Gamma)
	\end{equation}
	 maps each  $\Gamma$-constructible function $f$ on $\R^n$ to 
	$$\sum_{m\in \Z}m[f^{-1}(m)]_n \in \mathbf{K}(\Gamma[n]),$$
	for every $n\geq 0$. This is a morphism of graded groups, by the scissor relations in $\mathbf{K}^{\dim}(\Gamma)$.
	
	The Brianchon-Gram theorem for polyhedra (Theorem \ref{theo:ubBriGra}) tells us that 
		\begin{equation}\label{eq:char}
		\mathbf{1}_{P}=\sum_{F\in B(P)}(-1)^{\dim(F)+\ell(P)}\mathbf{1}_{T_FP}\end{equation}
		in $\mathcal{C}_{\Gamma}(\R^n)$, 
	where $B(P)$ is the set of non-empty relatively bounded faces of $P$. Applying the integration morphism \eqref{eq:intGamma}, we obtain the equality 
$$		[P]_n=\sum_{F\in B(P)}(-1)^{\dim(F)+\ell(P)}[T_F P]_n$$
	in $\mathbf{K}^{\dim}(\Gamma)$,  which expresses $[P]_n$ as a $\Z$-linear combination of classes of rational polyhedral cones.   
\end{proof}

\begin{prop}\label{prop:inR}
	If $\Gamma$ is divisible, then the ring $\mathbf{K}^{\dim}(\Gamma)$ is generated by the elements $[\R_{\geq 0}]_1$ and $[\R_{> 0}]_1$.
\end{prop}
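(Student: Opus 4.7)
The proposition should fall out immediately from the two preceding lemmas, so the plan is essentially to assemble them.

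First I would invoke Lemma \ref{lemm:brianchon-gram}: since $\Gamma$ is divisible, every element of $\mathbf{K}^{\dim}(\Gamma)$ can be written as a $\Z$-linear combination of classes $[C]_n$ where $C$ is a rational polyhedral cone in some $\R^n$. So it suffices to show that each such class lies in the subring $R\subset \mathbf{K}^{\dim}(\Gamma)$ generated by $[\R_{\geq 0}]_1$ and $[\R_{>0}]_1$.

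Second, I would apply Lemma \ref{lemm:cone}, which gives an explicit formula for $[C]_n$: it equals either $[\R_{\geq 0}]_1^n$ (when $C$ is not a linear subspace) or $[\R_{\geq 0}]_1^n + (-1)^{n-\dim(C)}[\R_{>0}]_1^n$ (when $C$ is a linear subspace). In either case this is manifestly a polynomial in $[\R_{\geq 0}]_1$ and $[\R_{>0}]_1$, hence lies in $R$.

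Combining the two steps, every $\Z$-linear combination of classes of rational polyhedral cones lies in $R$, and by Lemma \ref{lemm:brianchon-gram} this exhausts $\mathbf{K}^{\dim}(\Gamma)$. There is no real obstacle here — all the work has already been done in Lemmas \ref{lemm:poly}, \ref{lemm:cone}, and \ref{lemm:brianchon-gram}, where the divisibility of $\Gamma$ was crucially used (to translate tangent cones at faces to rational polyhedral cones based at the origin). The proposition itself is a one-line consequence.
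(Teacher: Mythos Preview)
Your proposal is correct and matches the paper's proof exactly: invoke Lemma \ref{lemm:brianchon-gram} to reduce to classes of rational polyhedral cones, then use Lemma \ref{lemm:cone} to see that every such class lies in the subring generated by $[\R_{\geq 0}]_1$ and $[\R_{>0}]_1$. The paper compresses this into two sentences, but the content is identical.
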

\begin{proof}
	By Lemma \ref{lemm:cone}, the class of any rational polyhedral cone lies in the subring of $\mathbf{K}^{\dim}(\Gamma)$ generated by $[\R_{\geq 0}]_1$ and $[\R_{> 0}]_1$.  Now the result follows from Lemma \ref{lemm:brianchon-gram}.
\end{proof}

\subsection{Explicit description of $\mathbf{K}^{\dim}(\Gamma)$.}\label{ss:explicit}
\begin{theorem}\label{theo:explicit}
Assume that $\Gamma$ is divisible.
\begin{enumerate}
	\item \label{it:explicit} There exists a unique isomorphism of graded rings
$$\mathbb{Z}[u,v]/(uv)\to \mathbf{K}^{\dim}(\Gamma)$$
that maps $u$ to $-[\mathbb{R}_{> 0}]_{1}$ and $v$ to $[\R_{\geq 0}]_1$.
 The inverse isomorphism 
 $$\mathbf{K}^{\dim}(\Gamma)\to \mathbb{Z}[u,v]/(uv)$$
 sends each element $\alpha$ of $\mathbf{K}^{\dim}(\Gamma)$ to 
 $$\sum_{n\geq 0}\left(\chi(\alpha_n)u^n+\chi_b(\alpha_n)v^n\right),$$ where $\alpha_n$ denotes the degree $n$ part of $\alpha$.

    \item \label{it:compariso} The morphism $$\mathbf{K}^{\dim}(\Gamma)\to \mathbf{K}^{\dim}_{\Q}(\Gamma)$$ in \eqref{eq:compar} is an isomorphism of graded rings.	
 \end{enumerate}
\end{theorem}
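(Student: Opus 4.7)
My plan is to prove part (1) by constructing morphisms in both directions between $\mathbf{K}^{\dim}(\Gamma)$ and $\Z[u,v]/(uv)$ and checking they are mutually inverse, and then to deduce part (2) as a formal consequence.

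For the forward map, I would start from the graded ring morphism $\mathbf{K}^{\dim}_{\Q}(\Gamma)\to \Z[u,v]/(uv)$ of \eqref{eq:DOAG} and precompose it with the natural surjection \eqref{eq:compar} to obtain a graded ring morphism
\[
\Phi\colon \mathbf{K}^{\dim}(\Gamma)\longrightarrow \Z[u,v]/(uv),\qquad [C]_n\mapsto \chi(C)u^n+\chi_b(C)v^n.
\]
I would then compute $\Phi$ on the two distinguished generators: since $\R_{\geq 0}$ is a $\Gamma$-rational polyhedron one has $\chi_b([\R_{\geq 0}]_1)=1$, and by definition $\chi([\R_{\geq 0}]_1)=0$, giving $\Phi([\R_{\geq 0}]_1)=v$. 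Writing $[\R_{> 0}]_1=[\R_{\geq 0}]_1-\sigma$ and using $\chi(\sigma)=\chi_b(\sigma)=1$, I would get $\Phi([\R_{>0}]_1)=-u$.

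For the backward map, I would use that $\mathbf{K}^{\dim}(\Gamma)$ is a commutative graded ring, so the universal property of the polynomial ring yields a graded ring morphism $\Z[u,v]\to \mathbf{K}^{\dim}(\Gamma)$ sending $u\mapsto -[\R_{>0}]_1$ and $v\mapsto [\R_{\geq 0}]_1$. Example \ref{exam:prodzero} gives $[\R_{\geq 0}]_1\cdot [\R_{>0}]_1=0$, so this descends to a morphism
\[
\Psi\colon \Z[u,v]/(uv)\longrightarrow \mathbf{K}^{\dim}(\Gamma).
\]
Proposition \ref{prop:inR} (whose proof is the real substance of the argument, resting on the unbounded Brianchon--Gram decomposition) tells us that $\mathbf{K}^{\dim}(\Gamma)$ is generated as a ring by $[\R_{\geq 0}]_1$ and $[\R_{>0}]_1$, so $\Psi$ is surjective. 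The composition $\Phi\circ \Psi$ fixes the generators $u$ and $v$, hence is the identity of $\Z[u,v]/(uv)$; consequently $\Psi$ is injective as well, so both $\Psi$ and $\Phi$ are isomorphisms, proving \eqref{it:explicit}.

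Part \eqref{it:compariso} then follows formally: $\Phi$ factors as
\[
\mathbf{K}^{\dim}(\Gamma)\twoheadrightarrow \mathbf{K}^{\dim}_{\Q}(\Gamma)\longrightarrow \Z[u,v]/(uv),
\]
with the first arrow the surjection \eqref{eq:compar}. Since the composite $\Phi$ is an isomorphism, the first arrow must be injective as well, hence a graded ring isomorphism. The theorem itself is therefore a short formal deduction; essentially all the work is in Proposition \ref{prop:inR} (and behind it Theorem \ref{theo:ubBriGra}), so no step here should pose a real obstacle once those are in hand. The only subtle point is checking that the polynomial identity $\Phi([\R_{>0}]_1)=-u$ comes out with the correct sign, which is where the scissor relation $[\R_{>0}]_1=[\R_{\geq 0}]_1-\sigma$ must be applied carefully.
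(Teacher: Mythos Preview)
Your proposal is correct and follows essentially the same approach as the paper: construct the morphism $\Psi$ from $\Z[u,v]/(uv)$ using Example~\ref{exam:prodzero}, note its surjectivity via Proposition~\ref{prop:inR}, and then verify that the composite with the Euler-characteristic morphism \eqref{eq:DOAG} (precomposed with \eqref{eq:compar}) is the identity on generators. The paper packages the same ingredients as a single chain $\Z[u,v]/(uv)\to \mathbf{K}^{\dim}(\Gamma)\to \mathbf{K}^{\dim}_{\Q}(\Gamma)\to \Z[u,v]/(uv)$ and observes that the composite being the identity together with surjectivity of the first two arrows forces all three to be isomorphisms, thereby proving \eqref{it:explicit} and \eqref{it:compariso} simultaneously; your version proves \eqref{it:explicit} first and then deduces \eqref{it:compariso}, but the content is identical.
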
	
\begin{proof}
Consider the unique morphism of graded rings  
$$\mathbb{Z}[u,v]\to \mathbf{K}^{\dim}(\Gamma)$$
that maps $u$ to $-[\mathbb{R}_{> 0}]_{1}$ and $v$ to $[\R_{\geq 0}]_1$. 
 This morphism is surjective, by Proposition \ref{prop:inR}, and 
 Example \ref{exam:prodzero} shows that it factors through a morphism 
\begin{equation}\label{eq:explicit}
	\mathbb{Z}[u,v]/(uv)\to \mathbf{K}^{\dim}(\Gamma).
\end{equation}	
 We now have a chain of morphisms of graded rings 
\begin{center}\begin{tikzcd}[column sep=large]	\mathbb{Z}[u,v]/(uv) \arrow[r,"\eqref{eq:explicit}"] & \mathbf{K}^{\dim}(\Gamma) \arrow[r,"\eqref{eq:compar}"] & \mathbf{K}^{\dim}_{\Q}(\Gamma) \arrow[r,"\eqref{eq:DOAG}"] & \Z[u,v]/(uv).
		\end{tikzcd}\end{center}
 The composite of the chain is the identity on $\Z[u,v]/(uv)$, because   $$\begin{array}{l|l}
  	 \chi(\R_{\geq 0})=0 &\chi_b(\R_{\geq 0})=1 \rule{0pt}{2.6ex} \rule[-0.9ex]{0pt}{0pt}
\\[1.5pt]  	\chi(\R_{> 0})=\chi(\R_{\geq 0})-\chi(\{0\})=-1 & \chi_b(\R_{> 0})=\chi_b(\R_{\geq 0})-\chi_b(\{0\})=0 
  	\end{array}$$ 
  	  	Since the morphisms \eqref{eq:explicit} and \eqref{eq:compar} are surjective,
  	 it follows that all the morphisms in the chain are isomorphisms, and that the composite of \eqref{eq:compar} and \eqref{eq:DOAG} is inverse to \eqref{eq:explicit}.
 \end{proof}

 \begin{corollary}
	If $\Gamma$ is divisible, then the ring $\mathbf{K}^{\dim}(\Gamma)$	does not depend on $\Gamma$: the ``extension of scalars'' morphism 
	$\mathbf{K}^{\dim}(\Gamma)\to \mathbf{K}^{\dim}(\R)$ is an isomorphism.
\end{corollary}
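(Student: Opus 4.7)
The plan is to deduce the corollary directly from part \eqref{it:explicit} of Theorem \ref{theo:explicit}, which provides a canonical presentation of $\mathbf{K}^{\dim}(\Gamma)$ whenever $\Gamma$ is divisible.

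First I would observe that, since $\Gamma$ is divisible and $\R$ is divisible, Theorem \ref{theo:explicit}\eqref{it:explicit} supplies isomorphisms of graded rings
\begin{equation*}
\Phi_\Gamma\colon \Z[u,v]/(uv)\xrightarrow{\;\sim\;} \mathbf{K}^{\dim}(\Gamma),\qquad \Phi_\R\colon \Z[u,v]/(uv)\xrightarrow{\;\sim\;} \mathbf{K}^{\dim}(\R),
\end{equation*}
each sending $u\mapsto -[\R_{>0}]_1$ and $v\mapsto [\R_{\geq 0}]_1$.

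Next I would check that the extension of scalars morphism $e\colon \mathbf{K}^{\dim}(\Gamma)\to \mathbf{K}^{\dim}(\R)$ satisfies $e\circ \Phi_\Gamma = \Phi_\R$. This is immediate from the definitions: $e$ is a morphism of graded rings, and the half-lines $\R_{>0}$ and $\R_{\geq 0}$ are $\Gamma$-constructible subsets of $\R$ (for any subgroup $\Gamma$, including $\{0\}$) whose classes map to the corresponding classes in $\mathbf{K}^{\dim}(\R)$ under $e$. Therefore the two generators $-[\R_{>0}]_1$ and $[\R_{\geq 0}]_1$ of $\mathbf{K}^{\dim}(\Gamma)$ are sent to the corresponding generators of $\mathbf{K}^{\dim}(\R)$, and the triangle commutes.

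Finally, from $e\circ \Phi_\Gamma = \Phi_\R$ together with the fact that $\Phi_\Gamma$ and $\Phi_\R$ are both isomorphisms, I conclude that $e = \Phi_\R\circ \Phi_\Gamma^{-1}$ is an isomorphism of graded rings. There is essentially no obstacle here: the whole content is packaged in Theorem \ref{theo:explicit}, and the corollary is simply the observation that the explicit presentation does not involve $\Gamma$.
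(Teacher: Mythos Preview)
Your proof is correct and follows the same approach as the paper, which simply says ``Immediate from Theorem \ref{theo:explicit}.'' You have spelled out in detail why it is immediate---namely, that the extension of scalars morphism is compatible with the explicit presentations provided by Theorem \ref{theo:explicit} for both $\Gamma$ and $\R$---but the underlying idea is identical.
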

\begin{proof}
	Immediate from Theorem \ref{theo:explicit}.
\end{proof}

 \begin{corollary}\label{coro:classpoly}
	For every non-empty $\Gamma$-rational polyhedron $P$ in $\R^n$, the class $[P]_n$ in $\mathbf{K}(\Gamma[n])$ is given by 
	$$[P]_n=\left\{\begin{array}{ll}
		 [\R_{\geq 0}]^n+(-1)^n[\R_{> 0}]^n& \mbox{ if }P\mbox{ is bounded;}
		\\[1.5ex] [\R_{\geq 0}]^n+[\R_{> 0}]^n & \mbox{if }P=\R^n;
		\\[1.5ex] [\R_{\geq 0}]^n_1 & \mbox{otherwise.} 
		\end{array} \right.$$
\end{corollary}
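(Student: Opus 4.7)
The plan is to reduce the corollary to the explicit description of $\mathbf{K}^{\dim}(\Gamma)$ provided by Theorem \ref{theo:explicit}. Inverting that isomorphism by substituting $v \mapsto [\R_{\geq 0}]_1$ and $u \mapsto -[\R_{>0}]_1$, one sees that every $\Gamma$-constructible subset $C$ of $\R^n$ satisfies
$$[C]_n = \chi_b(C)\,[\R_{\geq 0}]_1^n + (-1)^n\chi(C)\,[\R_{>0}]_1^n.$$
The three candidate formulas in the corollary correspond respectively to the pairs $(\chi(P),\chi_b(P)) = (1,1)$, $((-1)^n,1)$ and $(0,1)$, so the whole corollary reduces to computing these two Euler characteristics of $P$.

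First I would dispose of $\chi_b$: for $\gamma \in \Gamma$ sufficiently large, $P \cap [-\gamma,\gamma]^n$ is a non-empty compact convex polytope, hence contractible, so $\chi_b(P) = 1$ uniformly. The value of $\chi(P)$ is equally routine in the first two cases: a bounded $P$ is compact and contractible, giving $\chi(P) = 1$; and for $P = \R^n$ the ring-morphism property together with the values $\chi(\R_{\geq 0}) = 0$ and $\chi(\R_{>0}) = -1$ tabulated in the proof of Theorem \ref{theo:explicit} yield $\chi(\R) = -1$ and hence $\chi(\R^n) = (-1)^n$.

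The substantive step is the ``otherwise'' case, which I would establish directly at the level of classes. Theorem \ref{theo:ubBriGra} gives
$$[P]_n = \sum_{F \in B(P)}(-1)^{\dim F + \ell(P)}\,[T_F P]_n,$$
and divisibility of $\Gamma$ ensures that each relatively bounded face $F$ contains a $\Gamma$-rational point, so $T_F P$ becomes a rational polyhedral cone $C_F$ after translation by a vector in $\Gamma^n$. Under the standing hypothesis that $P$ is unbounded and distinct from $\R^n$, none of the $C_F$ is a linear subspace, so Lemma \ref{lemm:cone} gives $[T_F P]_n = [\R_{\geq 0}]_1^n$ for every $F \in B(P)$. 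Evaluating the Brianchon--Gram identity at any point $x \in P$ (where every indicator function equals $1$) forces $\sum_{F \in B(P)}(-1)^{\dim F + \ell(P)} = 1$, and the sum collapses to $[P]_n = [\R_{\geq 0}]_1^n$, as claimed.

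The main obstacle I anticipate is verifying that no $C_F$ becomes a linear subspace in the last case. This relies on the structural observation that a tangent cone $T_F P$ is an affine subspace only when $P$ itself is one; this exceptional situation is excluded by the hypothesis $P \neq \R^n$ (in the full-dimensional setting implicit here), so the clean ``otherwise'' branch of Lemma \ref{lemm:cone} applies uniformly across every term of the Brianchon--Gram sum.
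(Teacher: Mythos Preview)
Your reduction via Theorem~\ref{theo:explicit} and your treatment of the bounded case and of $P=\R^n$ match the paper exactly. For the ``otherwise'' case you take a different route---Brianchon--Gram together with Lemma~\ref{lemm:cone}---whereas the paper simply asserts that the one-point compactification of $P$ is contractible and hence $\chi(P)=0$.

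The obstacle you anticipate is a real gap, and your proposed fix does not close it. The structural claim ``$T_FP$ is an affine subspace only when $P$ itself is one'' fails already in full dimension. Take $P=[0,1]\times\R\subset\R^2$: this polyhedron is unbounded, distinct from $\R^2$, and not an affine subspace, yet $\mathrm{rec}(P)=\{0\}\times\R=\mathrm{Lin}(P)$, so $P$ itself lies in $B(P)$ and $T_PP=\R^2$ is linear. Running your Brianchon--Gram computation then gives
\[
[P]_2=-[\R^2]_2+2[\R_{\ge 0}\times\R]_2=[\R_{\ge 0}]_1^2-[\R_{>0}]_1^2,
\]
which is \emph{not} $[\R_{\ge 0}]_1^2$. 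Equivalently, $\chi(P)=\chi([0,1])\cdot\chi(\R)=-1\neq 0$.

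This is not a defect of your approach relative to the paper's: the paper's one-point compactification claim fails for exactly the same $P$ (that compactification has Euler characteristic $0$, not $1$), and in fact the corollary as stated is false for every unbounded $P\neq\R^n$ with $\mathrm{rec}(P)=\mathrm{Lin}(P)$. The correct value in that regime is $[P]_n=[\R_{\ge 0}]_1^n+(-1)^{n-\ell(P)}[\R_{>0}]_1^n$, exactly parallel to the linear-subspace branch of Lemma~\ref{lemm:cone}. Your argument becomes correct once you separate out this ``tube'' case (the only way $P\in B(P)$) and treat it with the other branch of Lemma~\ref{lemm:cone}; the paper's topological shortcut can likewise be repaired by first passing to the quotient by $\mathrm{Lin}(P)$.
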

\begin{proof}
	We always have $\chi_b(P)=1$. 
	If $P$ is bounded, then $\chi(P)=1$. Moreover, $\chi(\R^n)=(-1)^n$. Finally, if $P$ is unbounded but different from $\R^n$, then the one-point compactification of $P$ is contractible, so that  $\chi(P)=0$.  
	Now the result follows from Theorem \ref{theo:explicit}.
\end{proof}

 The condition that $\Gamma$ is divisible is essential in Theorem \ref{theo:explicit}: we will prove in Corollary \ref{cor:nondiv} below that the theorem is false whenever $\Gamma$ is not divisible.

\subsection{A question by Hrushovski and Kazhdan}\label{ss:question}
A slightly coarser version of the ring $\mathbf{K}^{\dim}_{\Q}(\Gamma)$ has been studied in the context of model theory of divisible ordered abelian groups. 
 For all integers $n\geq 0$, multiplication by $\sigma=[\{0\}]_1$ defines a morphism of abelian groups 
\begin{equation}\label{eq:dimensionup}
	\mathbf{K}_{\Q}(\Gamma[n])\to \mathbf{K}_{\Q}(\Gamma[n+1]),\,\alpha\mapsto \sigma \alpha.
\end{equation} 
We define $\mathbf{K}_{\Q}(\Gamma)$ as the colimit of the groups $\mathbf{K}_{\Q}(\Gamma[n])$ over $n\geq 0$. The Cartesian product of $\Gamma$-constructible sets induces a ring structure on $\mathbf{K}_{\Q}(\Gamma)$. Equivalently, we can define $\mathbf{K}_{\Q}(\Gamma)$ as the quotient of $\mathbf{K}^{\dim}_{\Q}(\Gamma)$ by the ideal $(\sigma-1)$. The effect of passing from $\mathbf{K}^{\dim}_{\Q}(\Gamma)$ to $\mathbf{K}_{\Q}(\Gamma)$ is that we forget the dimensions of the ambient vector spaces of $\Gamma$-constructible sets. Analogously, one defines the ring $\mathbf{K}(\Gamma)$ as the colimit of the groups $\mathbf{K}(\Gamma[n])$ or as the quotient of $\mathbf{K}^{\dim}(\Gamma)$ by the ideal $(\sigma-1)$.

In the model-theoretic literature, the ring $\mathbf{K}_{\Q}(\Gamma)$ is known as $\mathbf{K}(\mathrm{DOAG}_{\Gamma})$, the Grothendieck ring of the theory of divisible ordered abelian groups with parameters in $\Gamma$. It is completely understood when $\Gamma$ is divisible.

\begin{theorem}\label{theo:DOAG}
	Assume that $\Gamma$ is divisible, and consider the ring morphism 
	$$\mathbf{K}_{\Q}(\Gamma)\to \Z\times \Z$$ that sends the class of a $\Gamma$-constructible set $C$ to  $(\chi(C),\chi_b(C))$. This is an isomorphism.	
\end{theorem}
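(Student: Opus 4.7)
My plan is to derive Theorem \ref{theo:DOAG} directly from the explicit description provided by Theorem \ref{theo:explicit}, by passing to the appropriate quotient. Recall that $\mathbf{K}_{\Q}(\Gamma)$ is by definition the quotient of $\mathbf{K}^{\dim}_{\Q}(\Gamma)$ by the ideal $(\sigma - 1)$, where $\sigma = [\{0\}]_1$. The strategy is therefore: (i) transport the quotient along the isomorphism from Theorem \ref{theo:explicit}; (ii) simplify the resulting ring presentation and invoke the Chinese remainder theorem; (iii) check that the identification agrees with the map $(\chi, \chi_b)$.

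First I would apply Theorem \ref{theo:explicit}, which gives a chain of isomorphisms
$$\Z[u,v]/(uv) \xrightarrow{\sim} \mathbf{K}^{\dim}(\Gamma) \xrightarrow{\sim} \mathbf{K}^{\dim}_{\Q}(\Gamma)$$
under which $u \leftrightarrow -[\R_{>0}]_1$ and $v \leftrightarrow [\R_{\geq 0}]_1$. Using the scissor relation $[\R_{\geq 0}]_1 = [\{0\}]_1 + [\R_{> 0}]_1$, the generator $\sigma = [\R_{\geq 0}]_1 - [\R_{>0}]_1$ corresponds to $v + u$. Hence
$$\mathbf{K}_{\Q}(\Gamma) \;\cong\; \Z[u,v]/(uv,\,u+v-1).$$
Substituting $v = 1 - u$ turns the relation $uv = 0$ into $u(1-u) = 0$, giving $\mathbf{K}_{\Q}(\Gamma) \cong \Z[u]/(u^2 - u)$. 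Since $u^2 - u = u(u-1)$ and the ideals $(u)$ and $(u-1)$ are coprime in $\Z[u]$, the Chinese remainder theorem supplies an isomorphism $\Z[u]/(u(u-1)) \xrightarrow{\sim} \Z \times \Z$ given by evaluation at $0$ and at $1$.

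Finally, I would verify that this composite isomorphism is precisely the map $[C] \mapsto (\chi(C), \chi_b(C))$ claimed in the theorem. Using the values $\chi(\R_{>0}) = -1$, $\chi_b(\R_{>0}) = 0$, $\chi(\R_{\geq 0}) = 0$, $\chi_b(\R_{\geq 0}) = 1$ recalled in Section \ref{ss:euler}, the ring morphisms $\chi$ and $\chi_b$ send $u$ to $1$ and $0$ respectively, and $v$ to $0$ and $1$ respectively. Thus $(\chi, \chi_b)$ sends the class $\overline{u}$ to $(1,0)$ and $\overline{v}$ to $(0,1)$; this is the CRT isomorphism (up to the harmless swap of the two $\Z$-factors), and in particular an isomorphism of rings, as desired.

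There is no genuine obstacle here beyond bookkeeping: once Theorem \ref{theo:explicit} is in hand, the content of Theorem \ref{theo:DOAG} is just the elementary observation that imposing $\sigma = 1$ cuts $\Z[u,v]/(uv)$ down to $\Z \times \Z$.
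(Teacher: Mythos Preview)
Your argument is correct, but it differs from what the paper does. The paper does not give a proof of Theorem~\ref{theo:DOAG} at all: it simply records it as a known result, citing \cite{HK,FK,Ma}. In the paper's narrative, Theorem~\ref{theo:DOAG} is prior background that motivates the new Theorem~\ref{theo:explicit}, rather than a consequence of it.

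That said, your derivation is perfectly sound and there is no circularity: the proof of Theorem~\ref{theo:explicit} uses only the \emph{existence} of the Euler characteristics $\chi$ and $\chi_b$ from Section~\ref{ss:euler}, not the isomorphism statement of Theorem~\ref{theo:DOAG}. Indeed, the paper itself remarks immediately after Theorem~\ref{theo:DOAG} that its graded enhancement ``is contained in Theorem~\ref{theo:explicit}'', so your route---quotienting the graded isomorphism by $(\sigma-1)$ and applying the Chinese remainder theorem---is exactly the kind of argument the paper has in mind there. What your approach buys is a self-contained proof within the paper, avoiding the external references; what the paper's citation buys is historical accuracy about where the result first appeared and independence from the paper's own machinery.
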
 
\begin{proof}
This has been proved independently by several authors \cite{HK,FK,Ma}. 
	\end{proof}	

It is easy to enhance this result to a graded version: if $\Gamma$ is divisible, then 
the ring morphism 
$$\mathbf{K}^{\dim}_{\Q}(\Gamma)\to \Z[u,v]/(uv),\,[C]_n\mapsto \chi(C)u^n+\chi_b(C)v^n$$ is an isomorphism of graded rings. This property is contained in Theorem \ref{theo:explicit}, and already implicit in \cite{HK}. 

In Question 9.9 of \cite{HK}, Hrushovski and Kazhdan ask\footnote{Beware of a conflict of notation with \cite{HK}: they use $A$ to denote our parameter group $\Gamma$, and $\Gamma$ to denote a divisible ordered abelian group that contains $A$ as a subgroup. Their question is still broader than what is stated here, because they do not assume that $A$ is a subgroup of $\R$.} whether for {\em all} subgroups $\Gamma$ of $\R$, the natural morphism 
$$\mathbf{K}^{\dim}(\Gamma)/\mathrm{Ann}(\sigma)\to \mathbf{K}^{\dim}_{\Q}(\Gamma)/\mathrm{Ann}(\sigma)$$
is an isomorphism, where $\mathrm{Ann}(\sigma)$ denotes the ideal of elements killed by some power of $\sigma$. 
 Theorem \ref{theo:explicit} implies that, if $\Gamma$ is divisible, then $\mathrm{Ann}(\sigma)=\{0\}$ and 
\begin{equation}\label{eq:compar2}
	\mathbf{K}^{\dim}(\Gamma)\to \mathbf{K}^{\dim}_{\Q}(\Gamma)\end{equation} 
	is an isomorphism. In particular, we obtain a positive answer to Hrushovski and Kazhdan's question in the case where $\Gamma$ is divisible. A weaker statement was proved by Hrushovski and Kazhdan in Theorem 3.12 of \cite{HK2}: there, they showed that the morphism 
	$$	\mathbf{K}(\Gamma)\otimes_{\Z}\Q \to \mathbf{K}_{\Q}(\Gamma)\otimes_{\Z}\Q $$
	is an isomorphism if $\Gamma$ is divisible (that is, after forgetting ambient dimensions and tensoring with $\Q$ -- beware that they use $\mathbf{K}_{\Q}(\Gamma)$ to denote $\mathbf{K}(\Gamma)\otimes_{\Z}\Q $).

	The question becomes much more subtle when $\Gamma$ is not divisible: if $a$ is an element of $\Gamma\otimes_{\Z}\Q$, then $[\{a\}]_1-\sigma$ lies in the kernel of \eqref{eq:compar2}. It is proved in Lemma 9.7 of \cite{HK} that $\sigma([\{a\}]_1-\sigma)=0$ in $\mathbf{K}^{\dim}(\Gamma)$ if $2a\in \Gamma$; then $[\{a\}]_1-\sigma\in \mathrm{Ann}(\sigma)$. The $\Gamma$-rational  polytopes $\{0\}$ and $\{a\}$ in $\R$ are clearly not integrally isomorphic over $\Gamma$ if $a\notin \Gamma$, but it is less obvious that they define different classes in $\mathbf{K}^{\dim}(\Gamma)$. Since no argument is given in \cite{HK}, we provide a proof here.
	
\begin{prop}\label{prop:distinct}
	Let $a$ be an element in $\Gamma\otimes_{\Z}\Q$. Then $[\{a\}]_1= \sigma$ in $\mathbf{K}^{\dim}(\Gamma)$ if and only if $a\in \Gamma$. 
\end{prop}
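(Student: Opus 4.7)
The plan is to handle the two directions separately. The easy direction is immediate: if $a \in \Gamma$, then translation by $-a$ is an integral isomorphism over $\Gamma$ taking $\{a\}$ to $\{0\}$, so $[\{a\}]_1 = [\{0\}]_1 = \sigma$.

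For the converse, suppose $a \in (\Gamma \otimes_\Z \Q)\setminus\Gamma$. I would construct a group homomorphism $\mu_{\bar a}\colon \mathbf{K}(\Gamma[1]) \to \tfrac12\Z$ sending $[\{a\}]_1$ to $1$ and $\sigma$ to $0$. Since every $\Gamma$-constructible subset $C \subset \R$ is a finite disjoint union of intervals and isolated points with endpoints in $\Gamma\otimes_\Z\Q$, the one-sided limits $\ell_p^\pm(C) = \lim_{\varepsilon\to 0^+}\mathbf{1}_C(p\pm\varepsilon)$ exist in $\{0,1\}$ for every $p\in\R$. Define the local quantity
$$\delta_p(C) = \mathbf{1}_C(p) - \tfrac12\bigl(\ell_p^+(C)+\ell_p^-(C)\bigr)\in\tfrac12\Z,$$
which is nonzero only at the finitely many boundary and isolated points of $C$.

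The two key properties of $\delta_p$ are: it is scissor-additive (the identities $\mathbf{1}_C = \mathbf{1}_D + \mathbf{1}_{C\setminus D}$ and the analogue for $\ell_p^\pm$ hold pointwise for $D \subset C$), and for any integral isomorphism $\phi(x) = \varepsilon x + b$ (with $\varepsilon \in \{\pm1\}$ and $b \in \Gamma$) one checks directly that $\delta_p(\phi(C)) = \delta_{\phi^{-1}(p)}(C)$. Setting
$$\mu_{\bar a}(C) = \sum_{p \in (a+\Gamma)\cup(-a+\Gamma)}\delta_p(C)$$
(a finite sum), the first property gives scissor-additivity of $\mu_{\bar a}$, while the second, combined with the fact that $\phi^{-1}$ preserves $(a+\Gamma)\cup(-a+\Gamma) \subset \R$ (because $b\in\Gamma$ and $-(a+\Gamma) = -a+\Gamma$), gives invariance of $\mu_{\bar a}$ under integral isomorphisms over $\Gamma$. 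Hence $\mu_{\bar a}$ descends to a group homomorphism on $\mathbf{K}(\Gamma[1])$.

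To conclude, observe that $0 \notin (a+\Gamma)\cup(-a+\Gamma)$ precisely because $a\notin\Gamma$, so the only potentially contributing point for $\{0\}$ is $p=0$, which is excluded: $\mu_{\bar a}(\sigma) = 0$. On the other hand $\mu_{\bar a}([\{a\}]_1) = \delta_a(\{a\}) = 1$. These differ, so $[\{a\}]_1 \neq \sigma$ in $\mathbf{K}(\Gamma[1])$, hence in $\mathbf{K}^{\dim}(\Gamma)$. The main conceptual step is finding the correct ``weight'' $\tfrac12$ at boundary points, which makes $\delta_p$ behave as a localized Euler characteristic satisfying $\sum_p \delta_p(C) = \chi(C)$; once this is in place, all the verifications are routine.
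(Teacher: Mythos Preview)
Your argument is correct and follows the same essential strategy as the paper: construct a ``localized Euler characteristic'' on $\mathbf{K}(\Gamma[1])$ by summing a pointwise weight over a subset of $\R$ that is stable under $x\mapsto \pm x + b$ for $b\in\Gamma$, and observe that $\{0\}$ and $\{a\}$ contribute differently. The technical choices differ in two places. First, your local weight $\delta_p(C)=\mathbf{1}_C(p)-\tfrac12(\ell_p^+(C)+\ell_p^-(C))$ is exactly one half of the paper's integer-valued weight $w_C(p)$; the advantage of your formulation is that additivity under disjoint unions is immediate from linearity in $\mathbf{1}_C$, whereas the paper carries out a case-by-case verification over several paragraphs. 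Second, you sum over the coset pair $(a+\Gamma)\cup(-a+\Gamma)$, while the paper sums over $\Gamma$ itself; both sets are invariant under integral isomorphisms over $\Gamma$, and either choice separates $[\{0\}]_1$ from $[\{a\}]_1$ (the paper's invariant gives values $2$ and $0$, yours gives $0$ and $1$). So your route is a mild streamlining of the same idea, trading integer values for a cleaner proof of additivity.
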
	 
\begin{proof}
	If $a$ lies in $\Gamma$, then $\{0\}$ and $\{a\}$ are integrally isomorphic over $\Gamma$ {\em via} translation by $a$. Therefore, we may assume that $a\notin \Gamma$; then we must show that $[\{a\}]_1\neq  \sigma$ in $\mathbf{K}(\Gamma[1])$.
		
		To deduce this result, we propose a new additive invariant of $\Gamma$-constructible sets in $\R$, which is specific to the $1$-dimensional case. It is easy to see that the $\Gamma$-constructible sets in $\R$ are precisely the finite disjoint unions of points in $\Gamma\otimes_{\Z}\Q$ and open intervals with end-points in $(\Gamma\otimes_{\Z}\Q)\cup \{\pm \infty\}$: it suffices to check that this class of subsets of $\R$ is closed under finite Boolean combinations. 
		
	Let $C$ be a $\Gamma$-constructible set in $\R$, and let $x$ be an element of $\Gamma$. We define the {\em weight} $w_{C}(x)$ of $x$ with respect to $C$ in the following way:
	\begin{itemize}
	\item $w_C(x)=2$ if $x$ is an isolated point of $C$;
	\item $w_C(x)=1$ if exactly one of $[x,x+\varepsilon)$ and $(x-\varepsilon,x]$ is contained in $C$, for all sufficiently small $\varepsilon >0$;
	\item $w_C(x)=0$ if $x$ lies in the interior of $C$ or $\R\setminus C$;
	\item $w_C(x)=-1$ if $x\notin C$ and exactly one of $(x,x+\varepsilon)$ and $(x-\varepsilon,x)$ is contained in $C$, for all sufficiently small $\varepsilon >0$;
		\item $w_C(x)=-2$ if $x$ is an isolated point in $\R\setminus C$.
	\end{itemize}
	Note that $w_C(x)=-w_{\R\setminus C}(x)$. Furthermore,   
	  $w_C(x)=0$ for all but finitely many points $x$ in $\Gamma$, so that we can  define 
	$$\chi_{\Gamma}(C)=\sum_{x\in \Gamma}w_C(x).$$
	It is obvious that $\chi_{\Gamma}(C)$ is invariant under integral isomorphisms over $\Gamma$. We will now  prove that it is additive on disjoint unions, so that it satisfies the scissor relations in $\mathbf{K}(\Gamma[1])$.
	
	Let $C$ and $D$ be disjoint $\Gamma$-constructible subsets of $\R$, and set $E=C\cup D$. We will prove that 
	$w_E(x)=w_C(x)+w_D(x)$ for every point $x$ in $\Gamma$.   

\subsubsection*{Case 1: $D$ consists of a unique point $d$}   Clearly, $w_E(x)=w_C(x)+w_D(x)$ whenever $x\neq d$, so that we may assume that $x=d\in \Gamma$. Then $w_D(x)=2$, and a simple case-by-case analysis on the values of $w_C(x)$ shows that $w_E(x)=w_C(x)+2$ (note that $w_C(x)\leq 0$ because $x\notin C$).

\subsubsection*{Case 2: $D$ is a non-empty bounded open interval $(d_1,d_2)$}
It is clear that $w_E(x)=w_C(x)+w_D(x)$ whenever $x\notin \{d_1,d_2\}$. By symmetry, we may assume that $x=d_1\in \Gamma$.  Then $w_D(x)=-1$ and $w_C(x)\geq -1$. Another case-by-case analysis again confirms that 
	$w_E(x)=w_C(x)-1$.
	
\subsubsection*{Case 3: $D$ is a half-bounded open interval} By symmetry, we may assume that $D=(d,+\infty)$ for some $d\in \Gamma\otimes_{\Z} \Q$. As in the previous cases, we may further assume that $x=d\in \Gamma$; then the argument is the same as in Case 2.

\subsubsection*{General case} We can write any $\Gamma$-constructible set $D$ in $\R$ as a finite disjoint union of points and open intervals, so that the previous cases imply that $w_E(x)=w_C(x)+w_D(x)$ for every point $x$ in $\Gamma$.

\medskip This additivity property implies, in particular, that $$\chi_{\Gamma}(E)=\chi_{\Gamma}(C)+\chi_{\Gamma}(D).$$
It follows that there exists a unique group morphism 
$$\chi_{\Gamma}\colon \mathbf{K}(\Gamma[1])\to \Z$$ that maps 
$[C]_1$ to $\chi_{\Gamma}(C)$ for every $\Gamma$-constructible set $C$ in $\R$. This morphism distinguishes the classes $[\{a\}]_1$ and $\sigma$, because $\chi_{\Gamma}(\{a\})=0$ whereas $\chi_{\Gamma}(\sigma)=2$.
	\end{proof}
\begin{corollary}\label{cor:nondiv}
Points (1) and (2) in Theorem \ref{theo:explicit} are both false whenever  $\Gamma$ is not divisible.	
\end{corollary}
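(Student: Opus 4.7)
The plan is to derive both failures directly from Proposition \ref{prop:distinct}. Since $\Gamma\subset \R$ is torsion-free, the failure of divisibility yields some $\gamma\in \Gamma$ and $n\geq 1$ with $\gamma/n\notin \Gamma$; setting $a=\gamma/n\in (\Gamma\otimes_{\Z}\Q)\setminus \Gamma$, Proposition \ref{prop:distinct} gives $[\{a\}]_1\neq \sigma$ in $\mathbf{K}(\Gamma[1])$, hence also in $\mathbf{K}^{\dim}(\Gamma)$.

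To refute point (2), I would observe that $\{0\}$ and $\{a\}$ are rationally isomorphic over $\Gamma$ via translation by $a$: indeed, with $N=N'=\Z$, the translation vector $a$ lies in $N'_{\Q}\otimes_{\Z}\Gamma=\Gamma\otimes_{\Z}\Q$, as required in the definition of a rational isomorphism. Consequently, $[\{a\}]_1=\sigma$ in $\mathbf{K}_{\Q}(\Gamma[1])$, whereas $[\{a\}]_1\neq \sigma$ in $\mathbf{K}(\Gamma[1])$. Thus the natural morphism \eqref{eq:compar} has nontrivial kernel and cannot be an isomorphism.

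To refute point (1), I would argue by contradiction. If the morphism of (1) were an isomorphism, then its purported inverse $\alpha\mapsto \sum_n(\chi(\alpha_n)u^n+\chi_b(\alpha_n)v^n)$ would be injective. Both classes $\sigma$ and $[\{a\}]_1$ are sent to $u+v$, since $\chi(\{a\})=\chi_b(\{a\})=\chi(\{0\})=\chi_b(\{0\})=1$. Injectivity would therefore force $[\{a\}]_1=\sigma$ in $\mathbf{K}^{\dim}(\Gamma)$, again contradicting Proposition \ref{prop:distinct}.

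There is no essential obstacle here: once Proposition \ref{prop:distinct} is in hand, the two failures are immediate from the explicit formula in Theorem \ref{theo:explicit}\eqref{it:explicit} and from the fact that translation by an element of $\Gamma\otimes_{\Z}\Q$ constitutes a rational but not an integral isomorphism when the translation vector is not in $\Gamma$.
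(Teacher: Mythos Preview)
Your proof is correct and follows essentially the same approach as the paper: pick $a\in(\Gamma\otimes_{\Z}\Q)\setminus\Gamma$, invoke Proposition~\ref{prop:distinct} to get $[\{a\}]_1\neq\sigma$ in $\mathbf{K}^{\dim}(\Gamma)$, observe that they coincide in $\mathbf{K}^{\dim}_{\Q}(\Gamma)$ (refuting~(2)) and have the same Euler characteristics (refuting~(1)). Your write-up simply spells out a few routine details that the paper leaves implicit.
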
	
\begin{proof}
	Let $a$ be an element of $\Gamma\otimes_{\Z}\Q$ that does not lie in $\Gamma$.  The classes $[\{a\}]_1$ and $\sigma$ are distinct in $\mathbf{K}^{\dim}(\Gamma)$, by Proposition \ref{prop:distinct}, but they are equal in $\mathbf{K}^{\dim}_{\Q}(\Gamma)$. In particular, they have the same Euler characteristic and bounded Euler characteristic. 
\end{proof}

If $\Gamma$ is divisible, then the invariant $\chi_{\Gamma}$ from the proof of Proposition \ref{prop:distinct} is nothing but $\chi+\chi_b$: by Theorem \ref{theo:explicit}, we only need to observe that $\chi_{\Gamma}$ and $\chi+\chi_b$ take the same values on $\R_{\geq 0}$ and $\R_{>0}$. If $\Gamma$ is not divisible, then $\chi_{\Gamma}$ cannot be expressed in terms of $\chi$ and $\chi_b$ alone.    

\section{Application: the kernel of the motivic volume}\label{sec:appli}
In \cite{HK}, Hrushovski and Kazhdan developed a powerful theory of motivic integration over a non-archimedean valued field $K$ of residue characteristic $0$, based on model theory. One of the central results in this theory is a description of $\mathbf{K}(\VF_K)$, the Grothendieck ring of semi-algebraic sets over $K$, in terms of the Grothendieck ring of polytopes over the value group of $K$ and a generalized Grothendieck ring of varieties over the residue field of $K$. This makes it possible to define and compute geometric invariants of semi-algebraic sets over $K$ from those of algebraic varieties over the residue field. 
 In this section, we will use Theorem \ref{theo:explicit} to simplify the description of $\mathbf{K}(\VF_K)$ when $K$ is algebraically closed, and draw some interesting conclusions. We start by recalling the ingredients of Hrushovski and Kazhdan's result, formulating them in a more geometric fashion.

 Let $K$ be an algebraically closed non-trivially valued field with residue field $k$ of characteristic zero. We denote the ordered value group by $(\Gamma,\leq)$ and the valuation by  
$$v\colon K^{\ast}\to\Gamma.$$  We extend $v$ to $K$ by setting $v(0)=\infty$,  and declare that $\gamma < \infty$ for all $\gamma$ in $\Gamma$. The  ordered group $\Gamma$ is divisible, because $K$ is algebraically closed.

We further denote by $R$ the valuation ring in $K$, and by $k$ its residue field. We assume that $k$ has characteristic zero. We also assume that $\Gamma$ is an ordered subgroup of $\R$, even though this is not required in \cite{HK}, because we want to apply Theorem \ref{theo:explicit}. The most important case for applications to algebraic geometry is $$K=\bigcup_{d>0}k\llpar t^{1/d}\rrpar,$$ the field of Puiseux series over $k$ -- see for instance \cite{thuong,HL,NPS,NP,NiSh,NO}. Then $(\Gamma,\leq)\cong (\Q,\leq)$, so that our assumption is satisfied.

\subsection{The Grothendieck rings of varieties}\label{ss:Kvar}
Let $F$ be a field.  
    For every $n\geq 0$, we denote by $\mathbf{K}(\Var_F^{\leq n})$ the abelian group defined by the following presentation:
\begin{itemize}
	\item generators: isomorphism classes $[X]_n$ of $F$-schemes $X$ of finite type and dimension at most $n$;
	\item relations: whenever $Y$ is a closed subscheme of $X$, we have 
	$$[X]_n=[Y]_n+[X\setminus Y]_n.$$
\end{itemize}
We combine the groups $\mathbf{K}(\Var_F^{\leq n})$ into a graded abelian group
$$\mathbf{K}^{\dim}(\Var_F)=\bigoplus_{n\geq 0}\mathbf{K}(\Var_F^{\leq n}).$$
 This group has a unique graded ring structure such that, for all integers $m,n\geq 0$ and all $F$-schemes $X$ and $X'$ of finite type and dimension at most $m$, resp.~$n$, we have 
$$[X]_m\cdot [X']_{n}=[X\times_F X']_{m+n}.$$
The identity element for the multiplication is $1=[\Spec F]_0$, the class of a point in degree $0$. We call $\mathbf{K}^{\dim}(\Var_F)$ the {\em Grothendieck ring of $F$-varieties graded by dimension}. 
 It is customary to use the symbol $\LL$ to denote $[\A^1_F]_1$, the class of the affine line placed in degree $1$. We further set $\tau=[\Spec F]_1$, the class of a point in degree $1$. Multiplication by $\tau$ increases the grading by $1$, in parallel with the action of $\sigma$ in $\mathbf{K}^{\dim}(\Gamma)$.

We can similarly define another ring by removing the bounds on dimension. We denote by $\mathbf{K}(\Var_F)$ the abelian group defined by the following presentation:
\begin{itemize}
	\item generators: isomorphism classes $[X]$ of $F$-schemes of finite type;
	\item relations: whenever $Y$ is a closed subscheme of $X$, we have 
	$$[X]=[Y]+[X\setminus Y].$$
\end{itemize}
This abelian group again has a unique ring structure satisfying $[X]\cdot [X']=[X\times_F X']$ for all $F$-schemes $X$ and $X'$ of finite type, with identity element $1=[\Spec F]$. The resulting ring is called the {\em Grothendieck ring of $F$-varieties}. The forgetful morphism 
$$\mathbf{K}^{\dim}(\Var_F)\to \mathbf{K}(\Var_F),\,[X]_n\mapsto [X]$$ is surjective, and its kernel is the ideal generated by $\tau-1$. 

The rings $\mathbf{K}^{\dim}(\Var_F)$ and $\mathbf{K}(\Var_F)$ have profound applications in algebraic geometry; see in particular \cite{CLNS,NOa,LS}.

\subsection{The Grothendieck ring of semi-algebraic sets}\label{ss:GrSemAlg}
\begin{definition}\label{defi:semialg}
	Let $n$ be a non-negative integer. A subset $S$ of $K^n$ is called {\em semi-algebraic} if we can write it as a finite Boolean combination of sets of the form 
	$$\{a\in K^n\,|\,v(f(a))\geq v(g(a))\}$$ with $f,g\in K[x_1,\ldots,x_n]$.
\end{definition}

Taking for $g$ the zero polynomial, one sees that every constructible subset of $K^n$ (in the sense of algebraic geometry) is also semi-algebraic, but there are many more semi-algebraic sets. For instance, the unit ball 
$$R=\{a\in K\,|\,v(a)\geq 0\}$$
is semi-algebraic, but not constructible. It is obvious that a Cartesian product of semi-algebraic sets is again semi-algebraic.

\begin{definition}\label{defi:semialgmorph}
	Let $m$ and $n$ be non-negative integers, and let $S$ and $T$ be semi-algebraic subsets of $K^m$ and $K^n$, respectively. A map $S\to T$ is called {\em semi-algebraic} if its graph is semi-algebraic in $K^{m+n}$. 
\end{definition}

 It is straightforward to check that a composite of semi-algebraic maps is semi-algebraic, and that the inverse of a semi-algebraic bijection is semi-algebraic. Using the semi-algebraic maps as morphisms, we obtain the category of semi-algebraic sets over $K$, with semi-algebraic bijections as isomorphisms.
 
 We can now construct the Grothendieck ring of semi-algebraic sets over $K$ in the same way as the Grothendieck ring of varieties $\mathbf{K}(\Var_F)$ from Section \ref{ss:Kvar}.
  We denote by $\mathbf{K}(\VF_K)$ the abelian group defined by the following presentation:
 \begin{itemize} 
 	\item generators: isomorphism classes $[S]$ of semi-algebraic sets $S$ over $K$;
 	\item relations: for every semi-algebraic set $S$ and every semi-algebraic subset $T$ of $S$, we have 
 	$$[S]=[T]+[S\setminus T].$$
 \end{itemize}
  
  There exists a unique ring structure on $\mathbf{K}(\VF_K)$ such that $$[S]\cdot[S']=[S\times S']$$ for all semi-algebraic sets $S$ and $S'$. The identity element for the multiplication is the isomorphism class of a point. We call $\mathbf{K}(\VF_K)$ the Grothendieck ring of semi-algebraic sets over $K$.

\subsection{The isomorphism of Hrushovski and Kazhdan}\label{ss:IsoHK}
There are two natural constructions that relate the Grothendieck rings $\mathbf{K}(\VF_K)$, $\mathbf{K}^{\dim}(\Gamma)$ and $\mathbf{K}^{\dim}(\Var_k)$. First, for every integer $n\geq 0$, we can consider the {\em tropicalization map}
$$\trop\colon (K^{\ast})^n\to \R^n,\,a\mapsto (v(a_1),\ldots,v(a_n))$$
given by coordinatewise valuation. It follows immediately from the definitions that, for every $\Gamma$-constructible subset $C$ in $\R^n$, its inverse image $\trop^{-1}(C)$ is a semi-algebraic subset of $K^n$. The operation $\trop^{-1}$ clearly respects the scissor relations. Moreover, it sends integrally isomorphic $\Gamma$-constructible sets to isomorphic semi-algebraic varieties: an integral automorphism of $\R^n$ over $\Gamma$ corresponds to an automorphism of the algebraic torus $(K^{\ast})^n$ followed by a translation by a $K$-rational point in the torus.  Consequently, 
we obtain a ring morphism 
\begin{equation}\label{eq:tropinv}
\theta_{\trop}\colon \mathbf{K}^{\dim}(\Gamma)\to \mathbf{K}(\VF_K),\,[C]_n\mapsto [\trop^{-1}(C)].	
\end{equation}

Second, for every $R$-scheme $\cY$, we consider the {\em reduction map}
$$\red_{\cY}\colon \cY(R)\to \cY(k)$$ defined by reducing coordinates modulo the maximal ideal $\frak{m}$ in $R$. For every integer $m\geq 0$, every subscheme $\cY$ of $\A^m_R$ and every subscheme $X$ of $\cY\times_R k$, the set  $\red^{-1}_{\cY}(X(k))$ is semi-algebraic in $\A^m_R(K)=K^m$. 

\begin{example}
If $\cY=\A^m_R$ and $X$ is the hypersurface in $\A^m_k$ defined by a polynomial $f$ in $k[x_1,\ldots,x_m]$, then $\red^{-1}_{\cY}(X(k))$ is the set 
 $$\{a\in K^m\,|\,v(a_1),\ldots,v(a_m)\geq 0,\,v(\widetilde{f}(a))>0\}$$
 where $\widetilde{f}$ is any element in $R[x_1,\ldots,x_n]$ whose reduction modulo $\frak{m}$ equals $f$.
\end{example} 
 
   Using the henselian property of $R$, one shows that there exists a unique ring morphism 
 \begin{equation}\label{eq:redinv}
 	\theta_{\red}\colon \mathbf{K}^{\dim}(\Var_k)\to \mathbf{K}(\VF_K)	
 \end{equation}
 that maps $[X]_n$ to $[\red^{-1}_{\cY}(X(k))]$ for every integer $n\geq 0$, every smooth $R$-scheme $\cY$ of relative dimension $n$, every embedding $\cY\to \A^m_R$ and every subscheme $X$ of $\cY\times_R k$.
 
 \begin{example}\label{exam:ball}
	For every integer $n\geq 0$,  
	we consider  
	$$\mathbb{S}^n=\{a\in (K^{\ast})^n\,|\,v(a_1)=\ldots =v(a_n)=0\},$$
	the $n$-dimensional semi-algebraic torus over $K$. We also 
	write 
	$$\begin{array}{rcl}
	\mathbb{B}^n&=&\{a\in K^n\,|\,v(a_1),\ldots,v(a_n)\geq 0\}
	\\ \mathbb{B}^n_o&=&\{a\in K^n\,|\,v(a_1),\ldots,v(a_n)>0\}
	\end{array}$$ for the closed, resp.~open, unit polydisk around the origin of $K^n$. Then we have 
	 	$$\theta_{\trop}([\gamma]_n)=[\mathbb{S}^n],\,\qquad \theta_{\red}([p]_n)=[\mathbb{B}^n_o]$$ for every  $\gamma\in \Gamma^n$ and every $p\in k^n$. Note that these values depend heavily on the integer $n$, which is why we need the grading by dimension on $\mathbf{K}^{\dim}(\Gamma)$ and $\mathbf{K}^{\dim}(\Var_k)$. We further have 
	\begin{equation}\label{eq:ambi}\begin{array}{rclcl}
	\theta_{\trop}([\R_{\geq 0}]_1)&=&[\mathbb{B}^1\setminus \{0\}]&=&\theta_{\red}(\LL-1), 
	\\[ 1.5ex] \theta_{\trop}([\R_{>0}]_1)&=&[\mathbb{B}^1_o\setminus \{0\}]&=&\theta_{\red}(\tau-1).
	 \end{array}\end{equation}
\end{example}

The equalities in \eqref{eq:ambi} show that the morphisms $\theta_{\trop}$ and $\theta_{\red}$ are not orthogonal. Remarkably, Hrushovski and Kazhdan have proved that these are the {\em only} relations between $\theta_{\trop}$ and $\theta_{\red}$, and that the images of $\theta_{\trop}$ and $\theta_{\red}$ generate the whole ring $\mathbf{K}(\VF_K)$. 

\begin{theorem}[Hrushovski-Kazhdan]\label{theo:HKorig}
The ring morphism
$$
\theta\colon \mathbf{K}^{\dim}(\Gamma)\otimes_{\Z} \mathbf{K}^{\dim}(\Var_k)\to \mathbf{K}(\VF_K)$$
induced by $\theta_{\trop}$ and $\theta_{\red}$ is surjective. Its kernel is the ideal generated by the elements 
$$
([\R_{\geq 0}]_1\otimes 1)-(1\otimes (\LL-1)),\qquad ([\R_{>0}]_1\otimes 1) - (1\otimes (\tau-1)).$$ 
 \end{theorem}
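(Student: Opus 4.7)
The plan is to derive the theorem from the main result of Hrushovski and Kazhdan in \cite{HK}, which identifies $\mathbf{K}(\VF_K)$ with a quotient $\mathbf{K}(\mathrm{RV}[*])/I_{sp}$ of a Grothendieck ring of semi-algebraic sets over the auxiliary sort $\mathrm{RV}$, modulo an explicit specialization ideal $I_{sp}$. Recall that $\mathrm{RV}^{\ast}$ fits into a short exact sequence
$$1\to k^{\ast}\to \mathrm{RV}^{\ast}\to \Gamma\to 1$$
which, as $\Gamma$ is divisible and $k$ has characteristic zero, splits at the level of Grothendieck groups; roughly speaking, $\mathbf{K}(\mathrm{RV}[*])$ then assembles residue-field data (contributing $\mathbf{K}^{\dim}(\Var_k)$) with value-group data (contributing $\mathbf{K}^{\dim}(\Gamma)$).

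First I would verify that the two proposed generators of the kernel of $\theta$ indeed vanish in $\mathbf{K}(\VF_K)$: this is precisely the content of equation \eqref{eq:ambi} in Example \ref{exam:ball}. Hence $\theta$ descends to a well-defined ring morphism on the quotient by the proposed ideal.

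Next I would use the splitting above to construct a surjective ring morphism
$$\phi\colon \mathbf{K}^{\dim}(\Gamma)\otimes_{\Z}\mathbf{K}^{\dim}(\Var_k)\twoheadrightarrow \mathbf{K}(\mathrm{RV}[*]),$$
and check that $\theta$ agrees with the composition of $\phi$ with the Hrushovski-Kazhdan isomorphism, by comparing their values on the generating sets $\{[\trop^{-1}(C)]\}$ and $\{[\red^{-1}_{\cY}(X(k))]\}$ that appear in the definitions of $\theta_{\trop}$ and $\theta_{\red}$. Surjectivity of $\theta$ follows immediately.

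The main obstacle will be the description of the kernel: one must show that, under $\phi$, the specialization ideal $I_{sp}$ pulls back exactly to the ideal generated by the two elements listed in the statement. Here Theorem \ref{theo:explicit} is expected to play the decisive role. It identifies $\mathbf{K}^{\dim}(\Gamma)$ with $\Z[u,v]/(uv)$, so every element on the tropical side is a polynomial in just $[\R_{\geq 0}]_1$ and $[\R_{>0}]_1$. Consequently every specialization relation in $I_{sp}$ involving the tropical factor should reduce to a polynomial consequence of the two basic identifications in \eqref{eq:ambi}. Combining this reduction with the ring structures on both sides should yield the claimed presentation of the kernel.
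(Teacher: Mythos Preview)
Your broad plan---deduce the statement from Hrushovski and Kazhdan's description of $\mathbf{K}(\VF_K)$ as $\mathbf{K}(\mathrm{RV}[*])/I_{\mathrm{sp}}$ and then translate both the $\mathrm{RV}$-ring and the specialization ideal into the present notation---is exactly what the paper does: it simply cites Theorem~8.8 and Corollary~10.3 of \cite{HK} and notes that, because $\Gamma$ is divisible, the residue-field piece collapses to $!\mathbf{K}(\mathrm{RES}[\ast])=\mathbf{K}(\mathrm{RES}[\ast])=\mathbf{K}^{\dim}(\Var_k)$. No further argument is required; Hrushovski and Kazhdan already give $I_{\mathrm{sp}}$ by explicit generators that translate directly into the two elements in the statement.

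The genuine misstep in your proposal is the claim that Theorem~\ref{theo:explicit} ``is expected to play the decisive role'' in identifying the kernel. It does not, and invoking it here inverts the logical structure of the paper. Theorem~\ref{theo:HKorig} is a prior result of \cite{HK}, established entirely within their framework and independent of anything proved in this paper; the new contribution, Theorem~\ref{theo:explicit}, is used only afterwards to derive the simplified presentation in Theorem~\ref{theo:HKnew} \emph{from} Theorem~\ref{theo:HKorig}. If you try to use Theorem~\ref{theo:explicit} to compute the pullback of $I_{\mathrm{sp}}$, you still need to know what $I_{\mathrm{sp}}$ is---and once you know that from \cite{HK}, the translation into the two listed generators is immediate, with no appeal to the structure of $\mathbf{K}^{\dim}(\Gamma)$. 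A secondary gap: you describe $\phi$ only as a surjection, but to conclude that the preimage of $I_{\mathrm{sp}}$ is \emph{exactly} the stated ideal (rather than something larger) you need $\phi$ to be an isomorphism, which is precisely the content of the $\mathrm{RES}/\Gamma$ decomposition in \cite{HK} under the divisibility hypothesis.
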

 \begin{proof}
 This follows from Theorem 8.8 and Corollary 10.3 in \cite{HK}; we have reformulated some of their constructions in a more geometric language.  In particular, with the notations from \cite{HK}, we have 
 $$!\mathbf{K}(\mathrm{RES[\ast]})=\mathbf{K}(\mathrm{RES[\ast]})=\mathbf{K}^{\dim}(\Var_k)$$	
 	because $\Gamma$ is divisible.
 \end{proof}
 
 Using our description of $\mathbf{K}^{\dim}(\Gamma)$ in Theorem \ref{theo:explicit}, we can further simplify this result in the following way.
 
 \begin{theorem}\label{theo:HKnew}
 	The ring morphism 
 	$$\theta_{\red}\colon \mathbf{K}^{\dim}(\Var_k)\to \mathbf{K}(\VF_K)$$ is surjective, and its kernel is the ideal $\mathcal{I}$  generated by 
 	$(\LL-1)(\tau-1)$.
 \end{theorem}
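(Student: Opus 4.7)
My plan is to derive Theorem \ref{theo:HKnew} as a purely algebraic consequence of Hrushovski-Kazhdan's presentation in Theorem \ref{theo:HKorig} together with the explicit description of $\mathbf{K}^{\dim}(\Gamma)$ given by Theorem \ref{theo:explicit}. The key observation is that the generators of $\mathbf{K}^{\dim}(\Gamma)$ as a $\Z$-algebra, namely $[\R_{\geq 0}]_1$ and $[\R_{>0}]_1$, are precisely the classes that appear in the Hrushovski-Kazhdan relations identifying them with $\LL-1$ and $\tau-1$; so the factor $\mathbf{K}^{\dim}(\Gamma)$ is completely absorbed into $\mathbf{K}^{\dim}(\Var_k)$ after imposing the relations, and the sole surviving relation is the one forced by $[\R_{\geq 0}]_1\cdot [\R_{>0}]_1 = 0$.

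Concretely, I would first invoke Theorem \ref{theo:explicit} to identify
$$\mathbf{K}^{\dim}(\Gamma)\otimes_{\Z}\mathbf{K}^{\dim}(\Var_k)\;\cong\; \mathbf{K}^{\dim}(\Var_k)[u,v]/(uv),$$
where $v=[\R_{\geq 0}]_1$ and $u=-[\R_{>0}]_1$. Under this identification, the two distinguished elements in Theorem \ref{theo:HKorig} that generate $\ker \theta$ become
$$v\otimes 1 - 1\otimes(\LL-1), \qquad -u\otimes 1 - 1\otimes(\tau-1).$$
Killing these relations amounts to substituting $v\mapsto \LL-1$ and $u\mapsto 1-\tau$ in $\mathbf{K}^{\dim}(\Var_k)[u,v]/(uv)$. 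The result is $\mathbf{K}^{\dim}(\Var_k)$ modulo the single relation that the products of the substituted values vanishes, i.e.~$(\LL-1)(1-\tau)=0$, which is precisely the ideal $\mathcal{I}=((\LL-1)(\tau-1))$.

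Second, I would verify that the induced isomorphism
$$\mathbf{K}^{\dim}(\Var_k)/\mathcal{I}\;\xrightarrow{\sim}\; \mathbf{K}(\VF_K)$$
coming from Theorem \ref{theo:HKorig} is in fact induced by $\theta_{\red}$. This is immediate from the construction of $\theta$: on a pure tensor $x\otimes y$ with $x\in \mathbf{K}^{\dim}(\Gamma)$ and $y\in \mathbf{K}^{\dim}(\Var_k)$, one has $\theta(x\otimes y)=\theta_{\trop}(x)\,\theta_{\red}(y)$, and the relations \eqref{eq:ambi} say that $\theta_{\trop}(v)=\theta_{\red}(\LL-1)$ and $\theta_{\trop}(-u)=\theta_{\red}(\tau-1)$; hence every element of $\mathbf{K}^{\dim}(\Gamma)$ maps, modulo $\ker\theta$, into the image of $\theta_{\red}$. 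This shows that $\theta_{\red}$ is surjective and that it induces the isomorphism $\mathbf{K}^{\dim}(\Var_k)/\mathcal{I}\xrightarrow{\sim}\mathbf{K}(\VF_K)$, which is exactly the statement of the theorem.

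There is no real obstacle here beyond bookkeeping: once Theorem \ref{theo:explicit} is in hand, the proof is a substitution in a presentation by generators and relations. The one thing to keep straight is the sign convention (that $u$ corresponds to $-[\R_{>0}]_1$), so that the product $uv=0$ in $\Z[u,v]/(uv)$ translates correctly into $(\LL-1)(\tau-1)=0$ rather than its negative, but this is a trivial check.
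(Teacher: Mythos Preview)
Your proposal is correct and is precisely the approach the paper takes: the paper's proof reads in its entirety ``This follows directly from Theorems \ref{theo:explicit} and \ref{theo:HKorig},'' and you have simply unpacked that one-line deduction in detail. The substitution argument you describe (using the presentation $\mathbf{K}^{\dim}(\Gamma)\cong\Z[u,v]/(uv)$ to eliminate the $\Gamma$-factor and leave only the relation $(\LL-1)(\tau-1)=0$) is exactly what ``follows directly'' means here.
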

 \begin{proof}
This follows directly from Theorems \ref{theo:explicit} and \ref{theo:HKorig}. 	
 \end{proof}

With a slight abuse of notation, we continue to write $\theta_{\red}$ for the induced isomorphism $$\mathbf{K}^{\dim}(\Var_k)/\mathcal{I}\to \mathbf{K}(\VF_K).$$

\begin{remark}
In \cite{HK}, Hrushovski and Kazhdan establish an analog of Theorem \ref{theo:HKorig} at the level of Grothendieck {\em semi-rings}, which is a more delicate question. Theorem \ref{theo:HKnew} does not apply in that context, because the description of the Grothendieck ring of polytopes in Theorem \ref{defi:polytope} uses subtraction in an essential way. For most applications in algebraic geometry, the version for Grothendieck rings suffices.
\end{remark}

\subsection{The motivic volume}\label{ss:MotVol}
A key application of Theorem \ref{theo:HKorig} is the construction of the {\em motivic volume}, the invariant denoted by $\mathcal{E}'$ in Theorem 10.5 of \cite{HK}. This is a ring morphism 
$$\Psi\colon \mathbf{K}(\VF_K)\to \mathbf{K}(\Var_k)$$
 that captures the reduction behaviour of semi-algebraic sets over $K$. It is an extension of Denef and Loeser's {\em motivic nearby fiber} \cite{DL-igusa}, a motivic version of the nearby cycles functor in complex geometry (whence our notation $\Psi$); see \S3.7 in \cite{NPS} for a precise comparison. The motivic volume has found applications in singularity theory \cite{thuong,HL,NP}, enumerative geometry \cite{NPS} and birational geometry \cite{NiSh,NOa,NO}. More generally, it provides a way to extend all additive invariants of algebraic $k$-varieties (that is, invariants that can be defined as group morphisms from $\mathbf{K}(\Var_k)$) to additive invariants of semi-algebraic sets over $K$, by composition with $\Psi$. A prominent example of such an invariant is the Hodge-Deligne polynomial.

In terms of the presentation in Theorem \ref{theo:HKnew}, we can describe the morphism $\Psi$ as the composite of the isomorphism 
$$\theta_{\red}^{-1}\colon \mathbf{K}(\VF_K) \to \mathbf{K}^{\dim}(\Var_k)/\mathcal{I}$$ and the forgetful morphism 
$$\mathbf{K}^{\dim}(\Var_k)/\mathcal{I}\to \mathbf{K}(\Var_k)$$
that ignores the grading; note that omitting the grading collapses $\tau-1$, and therefore $\mathcal{I}$, to $0$. The morphism $\Psi$ is uniquely characterized by the property that 
$(\Psi\circ \theta_{\red})([X]_n)=[X]$ for every integer $n\geq 0$ and every $F$-scheme $X$ of finite type and dimension at most $n$. Thanks to Theorem \ref{theo:HKnew}, we also get an elegant description of the kernel of $\Psi$, so that we know precisely which part of the geometry is collapsed in passing from semi-algebraic sets over $K$ to algebraic varieties over $k$, at least at the level of Grothendieck rings.

\begin{theorem}\label{theo:kernel}
The kernel of the motivic volume morphism  
$$\Psi\colon \mathbf{K}(\VF_K)\to \mathbf{K}(\Var_k)$$ is generated by 
$[\mathbb{B}^1]-1$.
\end{theorem}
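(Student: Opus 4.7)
The plan is to translate the problem through the isomorphism of Theorem~\ref{theo:HKnew}. Under $\theta_{\red}^{-1}\colon \mathbf{K}(\VF_K)\to \mathbf{K}^{\dim}(\Var_k)/\mathcal{I}$, the motivic volume $\Psi$ is by construction the composite with the forgetful morphism $\pi\colon \mathbf{K}^{\dim}(\Var_k)/\mathcal{I}\to \mathbf{K}(\Var_k)$ that kills $\tau-1$. Because the generator $(\LL-1)(\tau-1)$ of $\mathcal{I}$ already lies in the ideal $(\tau-1)\subset\mathbf{K}^{\dim}(\Var_k)$, the kernel of $\pi$ is simply the image of $(\tau-1)$, and hence $\ker\Psi$ is the principal ideal of $\mathbf{K}(\VF_K)$ generated by $\theta_{\red}(\tau-1)$. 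By Example~\ref{exam:ball}, $\theta_{\red}(\tau-1)=[\mathbb{B}^1_o]-1$, which already gives a first form of the answer.

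To reach the generator $[\mathbb{B}^1]-1$ named in the statement, I would use the partition $\mathbb{B}^1=\mathbb{S}^1\sqcup\mathbb{B}^1_o$ to write $[\mathbb{B}^1]-1=([\mathbb{B}^1_o]-1)+[\mathbb{S}^1]$, so that the required equality of principal ideals reduces to showing $[\mathbb{S}^1]=\theta_{\red}(\LL-\tau)$ lies in the ideal generated by $[\mathbb{B}^1_o]-1$, equivalently that the ideals $(\LL-1)$ and $(\tau-1)$ coincide inside $\mathbf{K}^{\dim}(\Var_k)/\mathcal{I}$. The defining relation of $\mathcal{I}$ rearranges to the absorption identities $\LL(\tau-1)=\tau-1$ and $\tau(\LL-1)=\LL-1$, which are the main algebraic tool; I would combine them with the multiplicative action of $\mathbb{S}^1$ on $\mathbb{B}^1_o$ to produce an explicit factorisation of $[\mathbb{S}^1]$ as $([\mathbb{B}^1_o]-1)\cdot z$ for a concrete semi-algebraic class $z$ inside $\mathbf{K}(\VF_K)$.

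The main obstacle is precisely this last ideal-equality step. The absorption identities, while natural, do not by themselves force $(\LL-1)=(\tau-1)$ as ideals, since quotienting $\mathbf{K}^{\dim}(\Var_k)/\mathcal{I}$ further by $(\tau-1)$ recovers $\mathbf{K}(\Var_k)$, in which $\LL-1$ generically survives. Closing the gap therefore requires some genuinely semi-algebraic input beyond the relation $(\LL-1)(\tau-1)=0$, coming from the explicit geometric structure of the sphere $\mathbb{S}^1$ inside the closed unit ball and its interplay with the open ball $\mathbb{B}^1_o$.
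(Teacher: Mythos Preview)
Your first paragraph is correct and coincides with the paper's own one-line proof: via Theorem~\ref{theo:HKnew} the morphism $\Psi$ is identified with the forgetful map $\mathbf{K}^{\dim}(\Var_k)/\mathcal{I}\to \mathbf{K}(\Var_k)$, whose kernel is generated by the image of $\tau-1$, so $\ker\Psi=\bigl(\theta_{\red}(\tau-1)\bigr)=\bigl([\mathbb{B}^1_o]-1\bigr)$.

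Your hesitation about the remaining step is well founded, and in fact the obstacle you describe is fatal rather than merely technical. The element $[\mathbb{B}^1]-1$ does not lie in $\ker\Psi$ at all: from Example~\ref{exam:ball} (or directly from $\red_{\A^1_R}^{-1}(\A^1_k(k))=R$) we have $[\mathbb{B}^1]=\theta_{\red}(\LL)$, and since $\Psi\circ\theta_{\red}$ is the forgetful map we get
\[
\Psi\bigl([\mathbb{B}^1]-1\bigr)=\LL-1\in \mathbf{K}(\Var_k),
\]
which is nonzero (detected, for instance, by the Hodge--Deligne polynomial). Equivalently, your own observation that the image of $\LL-1$ survives in $\mathbf{K}(\Var_k)=(\mathbf{K}^{\dim}(\Var_k)/\mathcal{I})/(\tau-1)$ shows that the ideals $(\LL-1)$ and $(\tau-1)$ of $\mathbf{K}^{\dim}(\Var_k)/\mathcal{I}$ are genuinely different, so no ``semi-algebraic input'' can close the gap.

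In other words, the statement as printed is off by a subscript: the generator of $\ker\Psi$ is $[\mathbb{B}^1_o]-1$, the class of the \emph{open} unit ball minus a point, not $[\mathbb{B}^1]-1$. With that correction the paper's proof and your first paragraph are complete and identical; your attempt to bridge to $[\mathbb{B}^1]-1$ should be abandoned.
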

\begin{proof}	
Immediate from Theorem \ref{theo:HKnew}, since the kernel of the forgetful morphism $\mathbf{K}^{\dim}(\Var_k)\to \mathbf{K}(\Var_k)$ is generated by $\tau-1$.	
\end{proof}

It is instructive to compare Theorems \ref{theo:HKnew} and \ref{theo:kernel}  with analogous results in Ayoub's theory of nearby cycles on the category of motives \cite{Ayoub-nearby}, and his related  theory of motives of rigid analytic $K$-varieties (geometric upgrades of semi-algebraic sets)  \cite{Ayoub-rigid}. When $K$ is the field of Puiseux series over $k$, Ayoub proves\footnote{More precisely, Ayoub works over the Laurent series field  $k\llpar t\rrpar$ and proves that the category is generated by motives with {\em potential} good reduction.} in Theorem 1.3.23 of \cite{Ayoub-rigid} that the category of rigid analytic motives over $K$ is generated by motives of smooth rigid $K$-varieties with good reduction, which is analogous to the surjectivity of the morphism $\theta_{\red}$ in Theorem \ref{theo:HKorig}. Furthermore, homotopy invariance of rigid analytic motives is imposed by formally contracting the closed unit ball $\mathbb{B}^1$ to a point (Definition 1.3.19 in \cite{Ayoub-rigid}), mirroring the description of the kernel of $\Psi$ in Theorem \ref{theo:kernel}.	

	The connections between Ayoub's formalism and Denef and Loeser's motivic nearby fiber, resp.~the motivic volume of Hrushovski and Kazhdan, are explained in depth in \cite{IS1, AIS, IS2}, resp.~\cite{Forey}.

\end{document}